\documentclass[11pt, reqno]{amsart}
\usepackage{amsmath, amsthm, amscd, amsfonts, amssymb, graphicx, color}
\usepackage[bookmarksnumbered, colorlinks, plainpages]{hyperref}
     \makeatletter
     \def\section{\@startsection{section}{1}%
      \z@{.7\linespacing\@plus\linespacing}{.5\linespacing}%
     {\bfseries
     \centering
     }}
     \def\@secnumfont{\bfseries}
     \makeatother
\setlength{\textheight}{19.5 cm}
\setlength{\textwidth}{12.5 cm}

\newtheorem{thm}{Theorem}
\newtheorem{lem}[thm]{Lemma}
\newtheorem{prop}[thm]{Proposition}
\newtheorem{cor}[thm]{Corollary}
\theoremstyle{definition}

\newcommand{\vertiii}[1]{{\left\vert\kern-0.25ex\left\vert\kern-0.25ex\left\vert
#1 \right\vert\kern-0.25ex\right\vert\kern-0.25ex\right\vert}}

\def \dim   {\text {\rm dim}}

\def \diag  {\text {\rm diag}}
\def \span   {\text {\rm span}}

\def \min   {\text {\rm min}}

\def \diag  {\text {\rm diag}}
\def \lim   {\text {\rm lim}}

\def \wt    {\widetilde}

\def \qed   {\hfill \vrule height6pt width 6pt depth 0pt}

\begin{document}

\title[]{Sums and products of symplectic eigenvalues}

\author{Tanvi Jain}

\address{Indian Statistical Institute, New Delhi 110016, India}

\email{tanvi@isid.ac.in}
\date{\today}
\begin{abstract}
For every $2n\times 2n$ real positive definite matrix $A,$ there exists a real symplectic matrix $M$ such that $M^TAM=\diag(D,D),$ where $D$ is the $n\times n$ positive diagonal matrix with diagonal entries $d_1(A)\le \cdots\le d_n(A).$ The numbers $d_1(A),\ldots,d_n(A)$ are called the symplectic eigenvalues of $A.$
We derive analogues of Wielandt's extremal principle and multiplicative Lidskii's inequalities for symplectic eigenvalues.
\end{abstract}

\subjclass[2010]{ 15A45, 47A75, 49S05, 81P45}

\keywords{symplectic eigenvalues, Wielandt's extremal principle, multiplicative Lidskii's inequalities}

\maketitle
\section{Introduction}
Let $J$ be the $2n\times 2n$ matrix
\begin{equation}\label{eq1}
J=\begin{bmatrix}O & I_n\\
-I_n & O\end{bmatrix},
\end{equation}
where $I_n$ is the $n\times n$ identity matrix.
Consider the antisymmetric bilinear form $(\cdot,\cdot)$ on $\mathbb{R}^{2n}$ given by
$$(x,y)=\langle x,Jy\rangle,\ x,y\in\mathbb{R}^{2n}.$$
We call this bilinear form to be the {\it symplectic inner product} on $\mathbb{R}^{2n}.$
A $2n\times 2n$ real matrix that preserves the symplectic inner product is called a {\it symplectic matrix}, i.e., $M$ is a symplectic matrix if
$$\langle Mx,JMy\rangle=\langle x,Jy\rangle$$
for all $x,y$ in $\mathbb{R}^{2n}.$ This is equivalent to saying that $M$ is symplectic if and only if $M^TJM=J.$
We denote by $Sp(2n)$ the collection of all $2n\times 2n$ real symplectic matrices. This is a Lie group under matrix multiplication.
A basic theorem in symplectic matrix theory, generally known as {\it Williamson's theorem} tells us that for every $2n\times 2n$ real positive definite matrix $A,$
there exists a $2n\times 2n$ symplectic matrix $M$ such that
\begin{equation}
M^TAM=\begin{bmatrix}D & O\\
O & D\end{bmatrix},\label{eqi1}
\end{equation}
where $D$ is the $n\times n$ positive diagonal matrix with diagonal entries $d_1(A)\le\cdots\le d_n(A).$
These positive numbers are complete invariants for $A$ under the action of the symplectic group. We call them the {\it symplectic eigenvalues} of $A.$
Symplectic eigenvalues are important in different areas of mathematics and physics such as symplectic geometry, symplectic topology, both classical and quantum mechanics and quantum information. See, for instance, \cite{dms, dg, h, hz, sanders, k}.
In particular, applications of symplectic eigenvalues in the newer area of quantum information has driven much of the recent work on this topic \cite{dms, sanders, k, p, saf}.

For over a century, there has been an extensive interest in the study of variational principles for eigenvalues of Hermitian matrices and the relationships between the eigenvalues of $A,$ $B$ and $A+B.$
We refer the reader to Chapter III of \cite{rbh}, Chapter 5 of \cite{hj} and to the excellent exposition \cite{rbh1} for detailed accounts of these topics.
If $A$ is a $2n\times 2n$ real positive definite matrix, then $\imath A^{1/2}JA^{1/2}$ is a Hermitian matrix.
It can be seen that $A$ has symplectic eigenvalues $d_1,\ldots,d_n$ if and only if $\imath A^{1/2}JA^{1/2}$ has eigenvalues $\pm d_1,\ldots,\pm d_n.$
So, in principle, it could be possible to derive the properties of symplectic eigenvalues from the well-known properties of eigenvalues of Hermitian matrices.
But due to the complicated form of the matrix $\imath A^{1/2}JA^{1/2},$
it is often not feasible to do so.
The theory of symplectic eigenvalues also differs from the eigenvalue theory because of the differences in the symplectic group and the orthogonal group.
It is very remarkable that inspite of some major differences, we can find symplectic analogues to many of the classical results in eigenvalue theory \cite{bj,h,sanders,jm}.
Two versions of symplectic maxmin principles, analogous to the Courant-Fischer-Weyl minmax principle can be found in the literature. See \cite{dg, bj, bj2}.
These principles are in turn used to give symplectic analogues of an interlacing theorem, monotonicity principle and Weyl's inequalities.
In \cite{h}, Hiroshima proved the inequalities $\sum_{j=1}^{k}d_j(A+B)\ge\sum_{j=1}^{k}\left(d_j(A)+d_j(B)\right),$ $1\le k\le n$;
and these inequalities were generalised to Lidskii's type inequalities in \cite{jm} by using the analyticity of symplectic eigenvalues.
A symplectic analogue of the Ky Fan maximum principle was proved in \cite{h}.
An important extremal principle that includes the Courant-Fischer-Weyl principle as well as the Ky Fan maximum principle
 for eigenvalues of Hermitian matrices is the Wielandt extremal principle.
This played an important role in the development of the celebrated Horn's conjecture.
It gives us an expression for arbitrary sum of $k$ eigenvalues of Hermitian matrices.
If $A$ is an $n\times n$ Hermitian matrix with eigenvalues $\lambda_1(A)\le\cdots \le \lambda_n(A),$
then for all $1\le k\le n$ and $1\le i_1<\cdots < i_k\le n,$
$$\sum\limits_{j=1}^{k}\lambda_{i_j}(A)={\underset{\underset{\dim\mathcal{W}_j=n-i_j+1}{\mathcal{W}\supset\cdots\supset\mathcal{W}_k}}{\max}}{\underset{\underset{\textrm{orthonormal}}{x_j\in\mathcal{W}_j}}{\min}}\sum\limits_{j=1}^{k}\langle x_j,Ax_j\rangle.$$
We give a symplectic analogue of the Wielandt principle.
A subset $\{x_1,\ldots,x_k,y_1,\ldots,y_k\}$ of $\mathbb{R}^{2n}$ is called {\it symplectically orthonormal} if
$$\langle x_i,Jx_j\rangle=\langle y_i,Jy_j\rangle=0\textrm{ and }\langle x_i,Jy_j\rangle=\delta_{ij}$$
for all $i,j=1,\ldots,k.$

\begin{thm}\label{thm_wndt}
Let $A$ be a $2n\times 2n$ real positive definite matrix with symplectic eigenvalues $d_1\le\cdots\le d_n.$
Let $1\le k\le n$ and $1\le i_1<\cdots<i_k\le n.$ Then
\begin{equation}
\sum\limits_{j=1}^{k}d_{i_j}={\underset{\underset{\dim\, \mathcal{W}_j=2n-i_j+1}{\mathcal{W}_1\supset\cdots\supset\mathcal{W}_k}}{\max}}{\underset{\underset{\underset{\textrm{orthonormal}}{\textrm{symplectically}}}{x_j,y_j\in\mathcal{W}_j}}{\min}}\sum\limits_{j=1}^{k}\frac{\langle x_j,Ax_j\rangle+\langle y_j,Ay_j\rangle}{2}.\label{eqwndt}
\end{equation}
\end{thm}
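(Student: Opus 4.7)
The plan is to use Williamson's theorem to put $A$ in a standard form and then establish the two inequalities in \eqref{eqwndt} separately. First, pick a symplectic $M$ with $M^TAM=\diag(D,D)$. The substitution $x_j\mapsto M^{-1}x_j$, $y_j\mapsto M^{-1}y_j$ is itself symplectic, so it preserves symplectic orthonormality, carries the flag $\{\mathcal{W}_j\}$ to a flag of the same dimensions and containments, and $\langle x_j,Ax_j\rangle=\langle M^{-1}x_j,(M^TAM)M^{-1}x_j\rangle$. After this reduction, the assumption is $A=\diag(D,D)$ with the standard symplectic basis $\{e_p,f_p\}_{p=1}^n$ in which $Ae_p=d_pe_p$ and $Af_p=d_pf_p$.

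For the lower bound ($\ge$) in \eqref{eqwndt}, the approach is to exhibit the explicit flag
\[
\mathcal{W}_j^*=\span\{e_1,\ldots,e_n\}+\span\{f_{i_j},\ldots,f_n\},
\]
of dimension $2n-i_j+1$, and verify that the symplectically orthonormal choice $x_j=e_{i_j},\;y_j=f_{i_j}$ attains $\sum_j d_{i_j}$. To show this is the minimum over this flag, expand a general admissible system as $x_j=\sum_p\alpha_p^j e_p+\sum_p\beta_p^j f_p$ and $y_j=\sum_p\gamma_p^j e_p+\sum_p\delta_p^j f_p$; membership in $\mathcal{W}_j^*$ forces $\beta_p^j=\delta_p^j=0$ for $p<i_j$, and $\langle x_j,Jy_j\rangle=1$ becomes $\sum_{p\ge i_j}(\alpha_p^j\delta_p^j-\beta_p^j\gamma_p^j)=1$. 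Cauchy--Schwarz combined with AM--GM then yields
\[
\sum_{p\ge i_j}\bigl((\alpha_p^j)^2+(\beta_p^j)^2+(\gamma_p^j)^2+(\delta_p^j)^2\bigr)\ge 2,
\]
and since $d_p\ge d_{i_j}$ throughout this range, $\langle x_j,Ax_j\rangle+\langle y_j,Ay_j\rangle\ge 2d_{i_j}$. Summing in $j$ gives the required bound, and notably only the diagonal ($j=m$) part of symplectic orthonormality was used.

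For the upper bound ($\le$), in every admissible flag a symplectically orthonormal system with value at most $\sum_j d_{i_j}$ must be exhibited. Set $\mathcal{V}_j=\span\{e_1,f_1,\ldots,e_{i_j},f_{i_j}\}$, a symplectic subspace of dimension $2i_j$ on which $A$ has symplectic eigenvalues $d_1,\ldots,d_{i_j}$; by the symplectic Cauchy interlacing every two-dimensional symplectic plane $\mathcal{P}\subset\mathcal{V}_j$ satisfies $d_1(A|_\mathcal{P})\le d_{i_j}$. The count $\dim(\mathcal{W}_j\cap\mathcal{V}_j)\ge i_j+1$, together with the fact that any $(i_j+1)$-dimensional subspace of the $2i_j$-dimensional $\mathcal{V}_j$ has symplectic rank at least $2$, forces $\mathcal{W}_j\cap\mathcal{V}_j$ to contain some symplectic $2$-plane $\mathcal{P}_j$; taking $(x_j,y_j)$ to be the Williamson pair of $A|_{\mathcal{P}_j}$ contributes the term-by-term bound $\frac{1}{2}(\langle x_j,Ax_j\rangle+\langle y_j,Ay_j\rangle)=d_1(A|_{\mathcal{P}_j})\le d_{i_j}$.

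The main obstacle is to choose the planes $\mathcal{P}_1,\ldots,\mathcal{P}_k$ to be simultaneously mutually symplectically orthogonal, without which the $\{x_j,y_j\}$ would fail to form a symplectically orthonormal system across different $j$. At the $j$-th stage the naive dimension count gives only
\[
\dim\bigl(\mathcal{W}_j\cap\mathcal{V}_j\cap\textstyle\bigcap_{l<j}\mathcal{P}_l^{\perp_\omega}\bigr)\ge i_j-2j+3,
\]
which can be less than $2$, so the construction cannot proceed by arbitrary choice. The plan would be either to exploit the doubly nested structure of $\{\mathcal{W}_j\}$ and $\{\mathcal{V}_j\}$ via a Hall-type transversal argument that selects earlier planes with the later constraints in mind, or else to bypass the explicit construction by translating to the Hermitian Wielandt principle for $\imath A^{1/2}JA^{1/2}$ on $\mathbb{C}^{2n}$ and matching complex orthonormal bases with symplectically orthonormal ones.
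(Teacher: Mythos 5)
Your lower-bound direction (exhibiting the eigenbasis flag $\mathcal{W}_j^*$, using only the diagonal constraints $\langle x_j,Jy_j\rangle=1$ together with AM--GM, and matching the bound at $x_j=e_{i_j}$, $y_j=f_{i_j}$) is correct and coincides with the paper's argument. The other direction, however, is left incomplete, and you say so yourself: your dimension count $\dim\bigl(\mathcal{W}_j\cap\mathcal{V}_j\cap\bigcap_{l<j}\mathcal{P}_l^{\perp_s}\bigr)\ge i_j-2j+3$ can be negative (e.g.\ $i_j=j$), so the greedy choice of mutually skew-orthogonal planes $\mathcal{P}_j\subset\mathcal{W}_j\cap\mathcal{V}_j$ genuinely breaks down, and neither of your two fallback sketches is carried out. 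The Hermitian fallback is also the route the paper's introduction explicitly flags as generally unworkable, because symplectically orthonormal systems in $\mathbb{R}^{2n}$ do not translate into orthonormal systems for $\imath A^{1/2}JA^{1/2}$.

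The idea you are missing is that one should not try to force a \emph{single} family of planes to sit inside every $\mathcal{W}_j\cap\mathcal{V}_j$. The paper instead fixes the Euclidean structure $\langle\cdot,\cdot\rangle_\mathcal{B}$ attached to a symplectic eigenbasis $\mathcal{B}$, introduces the $\mathcal{B}$-complement $x\mapsto x'$ and the subspace $\mathcal{W}^\sharp=\mathcal{W}\cap\mathcal{W}'$, and proves (Lemma~\ref{lemr1}, Theorem~\ref{thmr2}) that there is one $2k$-dimensional symplectic space $\mathcal{U}$ admitting \emph{two different} $\mathcal{B}$-orthosymplectic bases: $\{x_j,x_j'\}$ with $x_j\in\mathcal{W}_j^\sharp$, and $\{w_j,w_j'\}$ with $w_j\in\mathcal{V}_j^\sharp$. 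Proposition~\ref{propr3} then shows $\sum_j\bigl(\langle x_j,Ax_j\rangle+\langle x_j',Ax_j'\rangle\bigr)$ depends only on the span $\mathcal{U}$, not on which $\mathcal{B}$-orthosymplectic basis is chosen, so the test vectors from $\{\mathcal{W}_j\}$ and those from $\{\mathcal{V}_j\}$ never have to be the same pairs. Decoupling the two flags in this way is exactly what makes the induction in Lemma~\ref{lemr1} close; your formulation, which welds the two flags together plane by plane, asks for strictly more than is needed and more than is available.
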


This gives us an alternative proof of the Lidskii type inequalities for symplectic eigenvalues that were first proved in \cite{jm}.

\begin{cor}\label{cor_lid}
Let $A$ and $B$ be two $2n\times 2n$ real positive definite matrices.
Suppose that $A,$ $B$ and $A+B$ have symplectic eigenvalues
$d_1(A)\le\cdots\le d_n(A),$ $d_1(B)\le\cdots\le d_n(B)$ and $d_1(A+B)\le\cdots\le d_n(A+B),$ respectively.
Then for all $k=1,\ldots,n$ and $1\le i_1<\cdots< i_k\le n,$
\begin{equation}
\sum\limits_{j=1}^{k}d_{i_j}(A+B)\ge \sum\limits_{j=1}^{k}d_{i_j}(A)+\sum\limits_{j=1}^{k}d_j(B).\label{eqlid}
\end{equation}
\end{cor}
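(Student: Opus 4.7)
The plan is to deduce \eqref{eqlid} from Theorem \ref{thm_wndt} along the classical route that converts Wielandt's principle into Lidskii's inequality in the Hermitian setting. First, I apply Theorem \ref{thm_wndt} to $A$ and fix a flag $\mathcal{W}_1\supset\cdots\supset\mathcal{W}_k$ with $\dim\mathcal{W}_j=2n-i_j+1$ that realises the outer maximum in \eqref{eqwndt}. By this choice,
$$\sum_{j=1}^{k} d_{i_j}(A)=\min\sum_{j=1}^{k}\frac{\langle x_j,Ax_j\rangle+\langle y_j,Ay_j\rangle}{2},$$
where the minimum is over symplectically orthonormal $\{x_j,y_j\}_{j=1}^{k}$ with $x_j,y_j\in\mathcal{W}_j$.

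Next I apply Theorem \ref{thm_wndt} to the positive definite matrix $A+B$, bounding the outer maximum from below by its value at the flag just chosen. Since $\langle x,(A+B)x\rangle=\langle x,Ax\rangle+\langle x,Bx\rangle$, and since for any two functions one has $\min(f+g)\ge\min f+\min g$ on a common domain, I obtain
$$\sum_{j=1}^{k}d_{i_j}(A+B)\ \ge\ \sum_{j=1}^{k}d_{i_j}(A)\ +\ \min\sum_{j=1}^{k}\frac{\langle x_j,Bx_j\rangle+\langle y_j,By_j\rangle}{2},$$
with the last minimum still taken over symplectically orthonormal tuples confined to the fixed subspaces $\mathcal{W}_j$.

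To finish, I enlarge the admissible set in this last minimum by dropping the constraint $x_j,y_j\in\mathcal{W}_j$; this can only shrink the minimum. The unconstrained minimum of $\sum \tfrac{1}{2}(\langle x_j,Bx_j\rangle+\langle y_j,By_j\rangle)$ over all symplectically orthonormal $\{x_j,y_j\}_{j=1}^{k}$ in $\mathbb{R}^{2n}$ equals $\sum_{j=1}^{k}d_j(B)$ by the symplectic Ky Fan principle of Hiroshima \cite{h}. Chaining the three bounds delivers \eqref{eqlid}.

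I do not expect any serious obstacle: the argument is a direct transcription of the standard Hermitian derivation of Lidskii from Wielandt, with Theorem \ref{thm_wndt} supplying all the substantive symplectic input. The only points needing a moment's verification are the elementary monotonicity $\min_{x_j,y_j\in\mathcal{W}_j}\ge\min_{x_j,y_j\in\mathbb{R}^{2n}}$ and the invocation of Hiroshima's symplectic Ky Fan identity, both of which are routine.
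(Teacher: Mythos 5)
Your argument is correct and is essentially the same as the paper's: both fix a flag realising the Wielandt maximum for $A$, then bound the mixed expression for $A+B$ from below by splitting the quadratic form and invoking the symplectic Ky Fan lower bound for $B$ (the paper cites Theorem 5 of \cite{bj} where you cite Hiroshima, but these are the same principle). The extra $\min(f+g)\ge\min f+\min g$ step and the explicit enlargement of the feasible set are just a slightly more spelled-out version of the inequality chain the paper writes in one display.
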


We also study the multiplicative versions of Lidskii's inequalities for symplectic eigenvalues.
For two positive definite matrices $A$ and $B,$ their {\it geometric mean} is the positive definite matrix
$$A\# B=A^{1/2}\left(A^{-1/2}BA^{-1/2}\right)^{1/2}A^{1/2}.$$
This was introduced by Pusz and Woronowicz in \cite{pw} and have important connections with
various areas such as physics, computer science, matrix analysis and Riemannian geometry \cite{n}.
Suppose $A$ and $B$ are two $2n\times 2n$ real positive definite matrices.
Let $d_1(A)\le\cdots\le d_n(A),$ $d_1(B)\le\cdots\le d_n(B)$ and
$d_1(A\# B)\le\cdots\le d_n(A\# B)$ be the symplectic eigenvalues of $A,$ $B$ and $A\# B,$ respectively.
The following relationships were proved in Theorem 3 of \cite{bj}: For all $k=1,\ldots,n,$
$$\prod\limits_{j=1}^{k}d_j^2(A\# B)\ge\prod\limits_{j=1}^{k}d_j(A)d_j(B)$$
and $$\prod\limits_{j=k}^{n}d_j^2(A\# B)\le \prod\limits_{j=k}^{n}d_j(A)d_j(B).$$

As a consequence of our analysis, we can extend these inequalities to arbitrary products of symplectic eigenvalues
analogous to the multiplicative inequalities proved by Lidskii for eigenvalues of two positive definite matrices \cite{markus}.

\begin{thm}\label{thmmlid}
Let $A$ and $B$ be two $2n\times 2n$ real positive definite matrices.
Then for all $k=1,\ldots,n$ and $1\le i_1<\cdots< i_k\le n$
\begin{equation}
\prod\limits_{j=1}^{k}d_{i_j}(A)d_j(B)\le \prod\limits_{j=1}^{k}d_{i_j}^2(A\# B)\le \prod\limits_{j=1}^{k}d_{i_j}(A)d_{n-j+1}(B).\label{eqmlid}
\end{equation}
\end{thm}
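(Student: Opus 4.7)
The plan has three phases: reduce to a single inequality by inversion duality, convert the resulting inequality to an additive form via logarithms, and then bridge to the multiplicative Lidskii conclusion through compound matrices.

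First, I observe that the two inequalities in \eqref{eqmlid} are interchanged by the involution $(A,B)\mapsto (A^{-1},B^{-1})$. Indeed, $(A\#B)^{-1}=A^{-1}\#B^{-1}$ is a standard identity for the operator geometric mean, and $d_{j}(X^{-1})=1/d_{n-j+1}(X)$ because the same symplectic $M$ that diagonalizes $X$ to $\diag(D,D)$ also diagonalizes $X^{-1}$ to $\diag(D^{-1},D^{-1})$. Reindexing $i_j\leftrightarrow n-i_{k-j+1}+1$ turns one bound into the other, so it suffices to prove, say, the upper bound.

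Second, taking logarithms, the upper bound becomes an additive statement of the shape of Corollary \ref{cor_lid}. To obtain it, I would apply Theorem \ref{thm_wndt} to $A\#B$ and invoke the Pusz--Woronowicz pointwise inequality
\[
\langle x,(A\#B)x\rangle\le \sqrt{\langle x,Ax\rangle\,\langle x,Bx\rangle},
\]
a consequence of the operator concavity of the geometric mean. This replaces the quadratic form on the right-hand side of \eqref{eqwndt} for $A\#B$ by a geometric mean of the corresponding forms for $A$ and $B$, yielding a variational upper bound on $\sum_{j} d_{i_j}(A\#B)$ in terms of quadratic forms of $A$ and $B$ separately, to which the additive Wielandt principle for $A$ and for $B$ can then be applied.

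Third, to upgrade this additive Ky Fan type bound to the multiplicative Lidskii form \eqref{eqmlid}, I would pass to antisymmetric tensor powers $\wedge^{k}$. In the Hermitian setting this well-known trick converts products of $k$ eigenvalues into a single eigenvalue of the compound matrix and turns additive Wielandt/Lidskii on compounds into multiplicative Wielandt/Lidskii on the original matrix. A symplectic analogue on $\wedge^{k}(\mathbb{R}^{2n})$, with an appropriately induced skew form, should translate the variational bound of the previous step into the product inequality \eqref{eqmlid}.

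The main obstacle will be executing the last step cleanly. The antisymmetric power $\wedge^{k}(\mathbb{R}^{2n})$ does not carry a canonical symplectic form compatible with symplectic eigenvalues in the sense of Williamson's theorem, so the compound-matrix reduction from the Hermitian case does not transfer verbatim. One must either establish a symplectic analogue of the Ando--Hiai type identity relating $\wedge^{k}(A\#B)$ to $(\wedge^{k}A)\#(\wedge^{k}B)$, or else bypass compound matrices entirely by working on the Riemannian manifold of positive definite matrices and exploiting the log-convexity of $t\mapsto \prod_{j=1}^{k}d_{i_j}(A\#_{t}B)$ along the geodesic $A\#_{t}B$ joining $A$ to $B$, which would let one deduce the multiplicative Lidskii inequality infinitesimally from the additive one in Corollary \ref{cor_lid}.
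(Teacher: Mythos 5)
Your plan breaks down at exactly the point you identify as "the main obstacle," and that obstacle is the heart of the matter, so this is not a proof but an unresolved sketch. The inversion duality in Phase~1 is sound: $(A\# B)^{-1}=A^{-1}\# B^{-1}$, $d_j(X^{-1})=1/d_{n-j+1}(X)$, and the reindexing $i_j\mapsto n-i_{k-j+1}+1$ does swap the two bounds in \eqref{eqmlid}, so reducing to one inequality is fine. But Phase~2 cannot deliver what Phase~3 needs. Applying Theorem~\ref{thm_wndt} to $A\# B$ and then invoking $\langle x,(A\# B)x\rangle\le\sqrt{\langle x,Ax\rangle\langle x,Bx\rangle}$ only produces an \emph{additive} bound on $\sum_j d_{i_j}(A\# B)$ in terms of quadratic forms of $A$ and $B$; it cannot become a statement about $\sum_j\log d_{i_j}(A\# B)$, because taking logarithms of symplectic eigenvalues does not commute with anything in sight, and there is no matrix whose $i_j$-th symplectic eigenvalue is $\log d_{i_j}(A\# B)$. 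So "the upper bound becomes an additive statement of the shape of Corollary~\ref{cor_lid}" is not a reduction you can actually perform. Phase~3 is then supposed to rescue this, and you correctly observe that a symplectic compound-matrix calculus is not available and that the log-convexity of $t\mapsto\prod_j d_{i_j}(A\#_t B)$ would be needed but is unestablished; moreover, even if that convexity held, it would give $\prod d_{i_j}^2(A\# B)\le\prod d_{i_j}(A)d_{i_j}(B)$, which is not \eqref{eqmlid} and still leaves the lower bound and the mismatched index $d_{n-j+1}(B)$ to reconcile.

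What the paper does instead is quite different and is the ingredient you are missing: it first upgrades Theorem~\ref{thm_wndt} to Theorem~\ref{thmwg}, a Wielandt principle for arbitrary Schur-concave, increasing, permutation-invariant $\varphi$ applied to the vector of symplectic eigenvalues of the compression $A_{\mathcal{M}}$, and then specializes to $\varphi(\alpha)=\prod\alpha_j$ to obtain the determinantal variational formula of Corollary~\ref{corprod}, $\prod_{j=1}^k d_{i_j}^2(A)=\max_{\mathcal{W}_1\supset\cdots\supset\mathcal{W}_k}\min_{\mathcal{M}}\det(A_{\mathcal{M}})$. With that in hand, the multiplicative inequality is proved directly: write $A\# B=A^{1/2}UB^{1/2}$ with $U$ orthogonal, apply the Cauchy--Schwarz inequality for Gram determinants to get $\lvert\det(A\# B)_{\mathcal{M}}\rvert^2\le\det(A_{\mathcal{M}})\det(B_{\mathcal{M}})$, bound $\det(B_{\mathcal{M}})$ by a product of ordinary eigenvalues of $B$, reduce $B$ to Williamson form by congruence invariance, and invoke Corollary~\ref{corprod}. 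Note that the Gram-determinant Cauchy--Schwarz step \emph{is} an exterior-power argument, but on the ordinary $\wedge^{2k}\mathbb{R}^{2n}$ with its Euclidean structure; no symplectic form on the exterior power is needed, which is why the paper's route succeeds where the compound-matrix analogy you were worried about does not transfer. To repair your plan you would need to supply Theorem~\ref{thmwg} and Corollary~\ref{corprod} (or an equivalent determinantal variational characterization) as the bridge from additive to multiplicative; without it, Phases~2 and~3 do not connect.
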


In Section 2, we recall some basic facts on symplectic spaces and symplectic eigenvalues, and introduce some terminology and notation.
 We prove Theorem \ref{thm_wndt} and Corollary \ref{cor_lid} in Section 3.
In the process we give a third version of the maxmin principle for symplectic eigenvalues.
Theorem 3 is proved in Section 4.

\section{Preliminaries}

A subspace $\mathcal{V}$ of $\mathbb{R}^{2n}$ is called a {\it symplectic space} if for every $x\in\mathcal{V},$ there exists a $y\in\mathcal{V}$ such that $\langle x,Jy\rangle\ne 0.$ It can be verified that every symplectic space has even dimension. Let $\mathcal{V}$ be a symplectic subspace of $\mathbb{R}^{2n}$ and let $S$ be a nonempty subset of $\mathcal{V}.$ Then the set $$S^{\perp_s}=\{y\in\mathcal{V}:\langle x,Jy\rangle=0\textrm{ for all }x\in S\}$$ is called the {\it symplectic complement} of $S$ in $\mathcal{V}.$ For any given nonempty set $S,$ $S^{\perp_s}$ is always a subspace of $\mathcal{V}.$ If $S$ is a subspace of $\mathcal{V},$ then $\dim\, S+\dim\, S^{\perp_s}=\dim\mathcal{V}.$ A subspace $\mathcal{U}$ of a symplectic space $\mathcal{V}$ is symplectic if and only if $\mathcal{U}\cap\mathcal{U}^{\perp_s}=\{0\}.$ In this case, $\mathcal{V}$ is the direct sum of $\mathcal{U}$ and its symplectic complement. Two vectors $x$ and $y$ are said to be {\it skew-orthogonal} if their symplectic inner product $\langle x,Jy\rangle=0.$ A nonempty set is called {\it skew-orthogonal} if all its vectors are mutually skew-orthogonal. A skew-orthogonal subspace of $\mathbb{R}^{2n}$ is called {\it isotropic}. A subspace $\mathcal{V}$ is isotropic if and only if $\mathcal{V}^{\perp_s}\supseteq\mathcal{V}.$ Clearly, the dimension of an isotropic subspace of a $2m$-dimensional symplectic space is always less than or equal to $m.$ A pair of vectors $(x,y)$ is said to be {\it normalised symplectic pair} if $\langle x,Jy\rangle=1,$ and a set $\{x_1,\ldots,x_k,y_1,\ldots,y_k\}$ is said to be {\it symplectically orthonormal} if $$\langle x_i,Jx_j\rangle =0=\langle y_i,Jy_j\rangle$$ and $$\langle x_i,Jy_j\rangle =\delta_{ij}$$ for all $i,j=1,\ldots,k.$ Every symplectically orthonormal set is linearly independent. If $\mathcal{V}$ is a $2k$-dimensional symplectic space and $\{x_1,\ldots,$ $x_k,y_1,\ldots,y_k\}$ is a symplectically orthonormal subset of $\mathcal{V},$ we call it to be a {\it symplectic basis} of $\mathcal{V}.$ The columns of a $2n\times 2n$ symplectic matrix form a symplectic basis of $\mathbb{R}^{2n}.$ A basis of a symplectic space $\mathcal{V}$ that is symplectic as well as orthonormal is called an {\it orthosymplectic basis} of $\mathcal{V}.$ The standard basis $\{e_1,\ldots,e_{2n}\}$ of $\mathbb{R}^{2n}$ is an orthosymplectic basis of $\mathbb{R}^{2n}.$ For more details on symplectic spaces we refer the reader to \cite{dms, dg}. \vskip.1in Let $\mathcal{V}$ be a symplectic space and let $\mathcal{B}=\{u_1,\ldots,u_m,v_1,\ldots,v_m\}$ be a symplectic basis of $\mathcal{V}.$ We define a new inner product $\langle\cdot,\cdot\rangle_\mathcal{B}$ on $\mathcal{V}$ as follows, (see \cite{bj2}): if $x$ and $y$ are two vectors of $\mathcal{V},$ we can write $x$ and $y$ as $$x=\sum\limits_{i=1}^{m}\left(\alpha_iu_i+\beta_iv_i\right)\textrm{ and }y=\sum\limits_{i=1}^{m}\left(\gamma_iu_i+\delta_iv_i\right).$$ Then $$\langle x,y\rangle_\mathcal{B}=\sum\limits_{i=1}^{m}\left(\alpha_i\gamma_i+\beta_i\delta_i\right).$$ We denote the norm of $x$ in this inner product by $\|x\|_\mathcal{B}.$ Clearly, $\mathcal{B}$ is an orthosymplectic basis of $\mathcal{V}$ under the inner product $\langle\cdot,\cdot\rangle_\mathcal{B}$ and the symplectic inner product $(\cdot,\cdot).$ We call this {\it $\mathcal{B}$-orthosymplectic basis} of $\mathcal{V}.$ We define the {\it $\mathcal{B}$-complement} of $x$ to be the vector $$x^\prime=\sum\limits_{i=1}^{m}\left(-\beta_iu_i+\alpha_iv_i\right).$$ One can see that $x^{\prime\prime}=-x$ for every $x\in\mathcal{V},$ and $u_j^\prime=v_j$ and $v_j^\prime=-u_j,$ $1\le j\le m.$ Also \begin{equation}\label{eq3} \langle x,y\rangle_\mathcal{B}=\langle x,Jy^\prime\rangle=\langle y,Jx^\prime\rangle=\langle x^\prime,y^\prime\rangle_\mathcal{B}. \end{equation} So, if $\{x_1,\ldots,x_k\}$ is a skew-orthogonal, $\mathcal{B}$-orthonormal subset of $\mathcal{V},$ then $\{x_1,\ldots,x_k,x_1^\prime,\ldots,x_k^\prime\}$ is a $\mathcal{B}$-orthosymplectic (symplectic as well as $\mathcal{B}$-orthonormal) set. \vskip.1in For a subspace $\mathcal{W}$ of $\mathcal{V},$ the {\it $\mathcal{B}$-complement} of $\mathcal{W}$ is the set $$\mathcal{W}^\prime=\{x^\prime:x\in\mathcal{W}\}.$$ Clearly, $\mathcal{W}^\prime$ is also a subspace of $\mathcal{W}$ and $\dim\mathcal{W}^\prime=\dim\mathcal{W}.$ We denote the intersection $\mathcal{W}\cap\mathcal{W}^\prime$ by $\mathcal{W}^\sharp.$ It can be seen that $$\mathcal{W}^\sharp=\{x\in\mathcal{W}:x^\prime\in\mathcal{W}\},$$ and $\dim\mathcal{W}>\frac{1}{2}\dim\mathcal{V}$ implies $\dim\mathcal{W}^\sharp>0.$

Let $A$ be a $2n\times 2n$ real positive definite matrix and let $d$ be a symplectic eigenvalue of $A.$
We call a pair of vectors $(u,v)$ to be a {\it symplectic eigenvector pair} of $A$ corresponding to $d$ if
$$Au=dJv\textrm{ and }Av=-dJu.$$
If $d_1(A),\ldots,d_n(A)$ are the symplectic eigenvalues of $A,$
then the set $\mathcal{B}=\{u_1,\ldots,u_n,$
$v_1,\ldots,v_n\}$ is called a {\it symplectic eigenbasis} of $A$ if
$\mathcal{B}$ is a symplectic basis of $\mathbb{R}^{2n}$ and each $(u_i,v_i)$ is a normalised symplectic eigenvector pair of $A$ corresponding to $d_i(A).$
The columns of the symplectic matrix $M$ in \eqref{eqi1} form a symplectic eigenbasis of $\mathbb{R}^{2n}$ corresponding to the symplectic eigenvalues $d_1(A),\ldots,d_n(A)$ of $A.$
With this terminology, we can alternatively state Williamson's theorem as follows:
Let $A$ be a $2n\times 2n$ real positive definite matrix. Then there exist positive numbers $d_1\le\cdots\le d_n$
and a symplectic basis $\mathcal{B}=\{u_1,\ldots,u_n,v_1,\ldots,v_n\}$ of $\mathbb{R}^{2n}$
such that $d_1,\ldots,d_n$ are the symplectic eigenvalues of $A$ and $\mathcal{B}$ is a corresponding symplectic eigenbasis.

\section{Proof of Theorem \ref{thm_wndt}}

We start with an inequality on symplectic eigenvalues that is analogous to the Poincar\'{e} inequality for eigenvalues. See Theorem III.1.1 of \cite{rbh}.

\begin{prop}\label{p1}
Let $A$ be a $2n\times 2n$ real positive definite matrix with symplectic eigenvalues $d_1\le\cdots\le d_n.$
Let $1\le k\le n.$ Then for any $2n-k+1$-dimensional subspace $\mathcal{M},$
there exist two vectors $u,v$ in $\mathcal{M}$ such that $\langle u,Jv\rangle=1$ and
\begin{equation}\label{eqp1}
d_k\ge\frac{\langle u,Au\rangle+\langle v,Av\rangle}{2}.
\end{equation}
\end{prop}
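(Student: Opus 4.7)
The plan is to localize the problem to the $2k$-dimensional symplectic subspace spanned by the symplectic eigenvector pairs of $A$ corresponding to the $k$ smallest symplectic eigenvalues, and then invoke the structural fact from Section 2 that a subspace occupying more than half the dimension of an ambient symplectic space has non-trivial $\sharp$-part.

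Concretely, I would fix a symplectic eigenbasis $\mathcal{B}=\{u_1,\ldots,u_n,v_1,\ldots,v_n\}$ of $A$, set $\mathcal{N}=\span\{u_1,v_1,\ldots,u_k,v_k\}$ (a $2k$-dimensional symplectic subspace of $\mathbb{R}^{2n}$ with induced symplectic/$\mathcal{B}$-orthosymplectic basis), and consider $\mathcal{W}=\mathcal{M}\cap\mathcal{N}$. By the dimension formula,
\[
\dim\mathcal{W}\ge(2n-k+1)+2k-2n=k+1>k=\tfrac{1}{2}\dim\mathcal{N}.
\]
The observation at the end of Section 2, applied inside $\mathcal{N}$, then guarantees that $\mathcal{W}^\sharp=\mathcal{W}\cap\mathcal{W}^\prime$ is non-zero. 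Pick any $0\neq x\in\mathcal{W}^\sharp$ and set $u=x/\|x\|_\mathcal{B}$, $v=x^\prime/\|x\|_\mathcal{B}$; both $u$ and $v$ lie in $\mathcal{W}\subseteq\mathcal{M}$.

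It remains to verify the two required properties. Using the identity \eqref{eq3}, $\langle x,Jx^\prime\rangle=\langle x,x\rangle_\mathcal{B}=\|x\|_\mathcal{B}^2$, so $\langle u,Jv\rangle=1$. For the energy, write $x=\sum_{i=1}^{k}(\alpha_iu_i+\beta_iv_i)$; using $Au_i=d_iJv_i$, $Av_i=-d_iJu_i$ and the symplectic orthonormality of $\mathcal{B}$, one finds
\[
\langle x,Ax\rangle=\sum_{i=1}^{k}d_i(\alpha_i^2+\beta_i^2)\le d_k\|x\|_\mathcal{B}^2.
\]
Since $x^\prime=\sum(-\beta_iu_i+\alpha_iv_i)$, the same computation yields $\langle x^\prime,Ax^\prime\rangle=\sum d_i(\beta_i^2+\alpha_i^2)=\langle x,Ax\rangle$, also bounded by $d_k\|x\|_\mathcal{B}^2$. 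Dividing by $\|x\|_\mathcal{B}^2$ and averaging gives $\frac{\langle u,Au\rangle+\langle v,Av\rangle}{2}\le d_k$, which is \eqref{eqp1}.

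The main conceptual hurdle is recognizing that the correct subspace to intersect with $\mathcal{M}$ is the symplectic eigenspace $\mathcal{N}$ of the $k$ lowest eigenvalues (not, say, $\mathcal{M}$ itself viewed inside $\mathbb{R}^{2n}$): working directly in $\mathbb{R}^{2n}$ would produce a symplectic pair from $\mathcal{M}^\sharp$ but only bound the energy by $d_n$. Once this localization is made, the $\mathcal{B}$-sharp lemma supplies the symplectic pair essentially for free, because the vector $x$ and its $\mathcal{B}$-complement $x^\prime$ automatically form a normalised symplectic pair with identical $A$-energies that are controlled by the largest eigenvalue $d_k$ appearing in $\mathcal{N}$.
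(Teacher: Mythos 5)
Your proof is correct and uses the same fundamental idea as the paper: intersect $\mathcal{M}$ with a subspace built from the first $k$ symplectic eigenvector pairs, apply the $\sharp$-observation to produce a vector $x$ with $x,x'$ both in the intersection, and then exploit the explicit formula $\langle x,Ax\rangle=\sum d_i(\alpha_i^2+\beta_i^2)$ to bound the averaged Rayleigh quotient by $d_k$. The only technical difference is the choice of intersector: the paper uses the $(n+k)$-dimensional space $\span\{u_1,\ldots,u_n,v_1,\ldots,v_k\}$, finds a vector in its $\sharp$-part using the dimension count $\dim(\mathcal{M}\cap\mathcal{N})\ge n+1>n$, and then separately deduces from the closure of $\mathcal{N}$ under the $\mathcal{B}$-complement that the vector must in fact be supported on only the first $k$ pairs; you instead intersect directly with the $2k$-dimensional symplectic subspace $\span\{u_1,\ldots,u_k,v_1,\ldots,v_k\}$, apply the $\sharp$-fact inside that subspace via $\dim(\mathcal{M}\cap\mathcal{N})\ge k+1>k$, and thereby get the support property for free. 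This is a mild streamlining rather than a genuinely different route, and both verifications of $\langle u,Jv\rangle=1$ and the energy bound proceed identically.
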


\begin{proof}
Let $\mathcal{B}=\{u_1,\ldots,u_n,v_1,\ldots,v_n\}$ be a symplectic eigenbasis of $\mathbb{R}^{2n}$ corresponding to
the symplectic eigenvalues $d_1,\ldots,d_n$ of $A.$
Let $\mathcal{N}$ be the space spanned by $\{u_1,\ldots,u_n,$
$v_1,\ldots,v_k\}.$
Since $\dim\mathcal{M}+\dim\mathcal{N}=3n+1,$
$\dim\left(\mathcal{M}\cap\mathcal{N}\right)\ge n+1.$
Hence we can find a vector $u$ in $\mathcal{M}\cap\mathcal{N}$ such that $\|u\|_\mathcal{B}=1$
and the $\mathcal{B}$-complement $u^\prime$ of $u$ also belongs to $\mathcal{M}\cap\mathcal{N}.$
Since $u,u^\prime$ both belong to $\mathcal{N}$ and $\mathcal{N}$ is spanned by
$u_1,\ldots,u_n,$
$v_1,\ldots,v_k,$
we must have
$$u=\sum\limits_{i=1}^{k}\left(\alpha_iu_i+\beta_iv_i\right),$$
where $\sum_{i=1}^{k}\left(\alpha_i^2+\beta_i^2\right)=1.$
We thus have $$u^\prime=\sum\limits_{i=1}^{k}\left(-\beta_iu_i+\alpha_iv_i\right).$$
A straightforward calculation shows that
\begin{equation}
\langle u,Au\rangle=\sum\limits_{i=1}^{k}\left(\alpha_i^2+\beta_i^2\right)d_i\le d_k.\label{eqrp1}
\end{equation}
Similarly, $\langle u^\prime,Au^\prime\rangle\le d_k.$
Since $\langle u,Ju^\prime\rangle=1,$
$u$ and $u^\prime$ are the required vectors that satisfy \eqref{eqp1}.
\end{proof}

We next prove a maxmin principle for symplectic eigenvalues. Two versions of this principle can be found in \cite{dg,bj} and \cite{bj2}.
Though our present version is similar to that given in Theorem 1 of \cite{bj2},
the maxmin principle in its present form plays a key role in proving Theorem \ref{thm_wndt}.
  
\begin{thm}\label{thm1maxmin}
Let $A$ be a $2n\times 2n$ real positive definite matrix,
and let $d_1\le \cdots\le d_n$ be the symplectic eigenvalues of $A.$
Then
\begin{equation}\label{eq1m}
d_k={\underset{\underset{dim\mathcal{M}=2n-k+1}{\mathcal{M}\subseteq\mathbb{R}^{2n}}}{\max}}{\underset{\underset{\langle x,Jy\rangle=1}{x,y\in\mathcal{M}}}{\min}}\frac{\langle x,Ax\rangle+\langle y,Ay\rangle}{2},\ \ 1\le k\le n.
\end{equation}
\end{thm}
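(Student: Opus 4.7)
The plan is to establish the two inequalities that together give the identity \eqref{eq1m}. The upper bound $d_k \ge \max_{\mathcal{M}} \min_{x,y}$ is essentially free: Proposition \ref{p1} asserts that inside every $(2n-k+1)$-dimensional subspace $\mathcal{M}$ one can find a normalised symplectic pair $(u,v)$ with $\tfrac{1}{2}(\langle u,Au\rangle+\langle v,Av\rangle)\le d_k$. So the inner minimum over all such pairs is at most $d_k$, uniformly in $\mathcal{M}$, and taking the supremum preserves that bound.

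For the reverse inequality I would exhibit an explicit subspace realising the maximum. Let $\mathcal{B}=\{u_1,\ldots,u_n,v_1,\ldots,v_n\}$ be a symplectic eigenbasis of $A$, and take
$$\mathcal{M}_0=\mathrm{span}\{u_1,\ldots,u_n,v_k,v_{k+1},\ldots,v_n\},$$
which has the required dimension $n+(n-k+1)=2n-k+1$. For arbitrary $x,y\in\mathcal{M}_0$, write $x=\sum_{i=1}^n\alpha_i u_i+\sum_{i=k}^n\beta_i v_i$ and $y=\sum_{i=1}^n\gamma_i u_i+\sum_{i=k}^n\delta_i v_i$. The eigenvector relations $Au_i=d_iJv_i$, $Av_i=-d_iJu_i$ together with the symplectic orthonormality of $\mathcal{B}$ diagonalise $\langle \cdot,A\cdot\rangle$, giving $\langle x,Ax\rangle+\langle y,Ay\rangle=\sum_{i=1}^n d_i(\alpha_i^2+\gamma_i^2)+\sum_{i=k}^n d_i(\beta_i^2+\delta_i^2)$, while the normalisation $\langle x,Jy\rangle=1$ becomes $\sum_{i=k}^n(\alpha_i\delta_i-\beta_i\gamma_i)=1$. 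Applying the trivial bound $|\alpha_i\delta_i-\beta_i\gamma_i|\le\tfrac12(\alpha_i^2+\beta_i^2+\gamma_i^2+\delta_i^2)$ to the constraint, discarding the nonnegative $i<k$ terms from the quadratic form, and then invoking $d_i\ge d_k$ for $i\ge k$ yields $\tfrac12(\langle x,Ax\rangle+\langle y,Ay\rangle)\ge d_k$, as wanted.

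The point that requires care is the asymmetric structure of $\mathcal{M}_0$: it contains all the $u_i$'s but only those $v_i$ with $i\ge k$. The coefficients $\alpha_i,\gamma_i$ for $i<k$ are thus free, but they contribute only nonnegatively to $\langle x,Ax\rangle+\langle y,Ay\rangle$ and, crucially, do not appear in $\langle x,Jy\rangle$ (the matching $v_i$'s being absent), so they can simply be dropped when deriving the lower bound. I expect this bookkeeping, rather than any hard estimate, to be the main obstacle; once it is handled, combining both directions gives equality in \eqref{eq1m}.
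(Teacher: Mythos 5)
Your proof is correct and follows essentially the same route as the paper: Proposition \ref{p1} for the upper bound, and the explicit subspace $\mathcal{M}_0=\mathrm{span}\{u_1,\ldots,u_n,v_k,\ldots,v_n\}$ with the AM--GM bound on $\langle x,Jy\rangle$ for the lower bound. The bookkeeping you flag (dropping the $i<k$ terms, which contribute nonnegatively to the quadratic form and not at all to the constraint) is exactly how the paper handles it too.
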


\begin{proof}
Let $\mathcal{M}$ be a $2n-k+1$-dimensional subspace of $\mathbb{R}^{2n}.$
By Proposition \ref{p1}
we can find a normalised symplectic pair $(u,v)$ that satisfy \eqref{eqp1}.
Then
\begin{eqnarray*}
d_k &\ge & \frac{\langle u,Au\rangle+\langle v,Av\rangle}{2}\\
&\ge & {\underset{\underset{\langle x,Jy\rangle=1}{x,y\in\mathcal{M}}}{\min}}\frac{\langle x,Ax\rangle+\langle y,Ay\rangle}{2}.
\end{eqnarray*}
This shows that the right hand side of \eqref{eq1m} is less than or equal to $d_k.$
\vskip.1in
Let $\mathcal{B}=\{u_1,\ldots,u_n,v_1,\ldots,v_n\}$ be a symplectic eigenbasis of $\mathbb{R}^{2n}$ corresponding to the symplectic eigenvalues $d_1,\ldots,d_n$ of $A.$
Let $\mathcal{M}_k$ be the subspace spanned by $\{u_1,\ldots,u_n,$
$v_k,\ldots,v_n\}.$
Clearly $\dim\mathcal{M}_k=2n-k+1.$
Let $x=\sum_{i=1}^{n}\alpha_iu_i+\sum_{i=k}^{n}\beta_iv_i$ and $y=\sum_{i=1}^{n}\gamma_iu_i+\sum_{i=k}^{n}\delta_iv_i$
be any two elements of $\mathcal{M}_k$ such that $\langle x,Jy\rangle=1.$
We have
\begin{eqnarray*}
1 &=& \langle x,Jy\rangle\\
&=& \sum\limits_{i=k}^{n}\left(\alpha_i\delta_i-\beta_i\gamma_i\right)\\
&\le & \sum\limits_{i=k}^{n}\frac{\alpha_i^2+\beta_i^2+\gamma_i^2+\delta_i^2}{2}.
\end{eqnarray*}
The last inequality follows from the arithmetic-geometric mean inequality.
Now we have
\begin{eqnarray}
 \frac{\langle x,Ax\rangle+\langle y,Ay\rangle}{2}\ & =&\sum\limits_{i=1}^{n}\frac{\alpha_i^2+\gamma_i^2}{2}d_i+\sum\limits_{i=k}^{n}\frac{\beta_i^2+\delta_i^2}{2}d_i \nonumber\\
 &\ge &d_k\sum\limits_{i=k}^{n}\frac{\alpha_i^2+\beta_i^2+\gamma_i^2+\delta_i^2}{2} \nonumber\\
 &\ge& d_k .\label{eqs1}
\end{eqnarray}
Thus
\begin{equation}
d_k\le {\underset{\underset{\langle x,Jy\rangle=1}{x,y\in\mathcal{M}_k}}{\min}}\frac{\langle x,Ax\rangle+\langle y,Ay\rangle}{2}.\label{eq2m}
\end{equation}
The two sides of \eqref{eq2m} are equal for $x=u_k$ and $y=v_k.$
This gives the equality \eqref{eq1m}.
\end{proof}

\begin{lem}\label{lemr1}
Let $\mathcal{V}$ be a $2n$-dimensional symplectic space and let $\mathcal{B}$ be a symplectic basis of $\mathcal{V}.$
For $1\le k\le n,$ let $\mathcal{W}_1\supseteq\cdots\supseteq\mathcal{W}_k$ be a decreasing chain of subspaces of $\mathcal{V}$
with $\dim\mathcal{W}_j\ge n+k-j+1,$ $1\le j\le k.$
Let $\{w_1,\ldots,w_{k-1}\}$ be a $\mathcal{B}$-orthonormal, skew-orthogonal set
such that $w_j\in\mathcal{W}_j^\sharp,$
$j=1,\ldots,k-1,$
and let $\mathcal{U}$ be the space spanned by $\{w_1,\ldots,w_{k-1},w_1^\prime,\ldots,w_{k-1}^\prime\}.$
Then there exists an element $v\in\mathcal{W}_1^\sharp\cap\{w_1,\ldots,w_{k-1}\}^{\perp_s}$
such that $\mathcal{U}+\span\{v,v^\prime\}$ has a $\mathcal{B}$-orthosymplectic basis
$\{v_1,\ldots,v_k,v_1^\prime,\ldots,v_k^\prime\},$
where $v_j\in\mathcal{W}_j^\sharp$ for all  $j=1,\ldots,k.$
\end{lem}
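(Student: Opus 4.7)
My plan is to build the $\mathcal{B}$-orthosymplectic basis $\{v_1,\ldots,v_k,v_1',\ldots,v_k'\}$ by a downward induction on $j$, mimicking a symplectic Gram--Schmidt procedure seeded by the $w_j$, and then to obtain $v$ as a corrected element of the resulting span. The key dimensional input is that $\mathcal{W}_j^\sharp$ is closed under the $\mathcal{B}$-complement $x\mapsto x'$ and satisfies $\dim\mathcal{W}_j^\sharp\ge 2(k-j+1)$, while skew-orthogonality to the $2(k-j)$ vectors $\{v_l,v_l':l>j\}$ built at previous steps imposes at most $2(k-j)$ linear conditions; the $'$-closed subspace $\mathcal{P}_j:=\mathcal{W}_j^\sharp\cap\{v_l,v_l':l>j\}^{\perp_s}$ therefore has $\dim\mathcal{P}_j\ge 2$.

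I would run $j$ from $k$ down to $1$. For $j=k$ I would pick any $\mathcal{B}$-unit $v_k\in\mathcal{W}_k^\sharp$. For $j<k$ I would set
$$\tilde v_j \;=\; w_j \;-\;\sum_{l>j}\bigl(\langle w_j,Jv_l'\rangle\,v_l \;-\;\langle w_j,Jv_l\rangle\,v_l'\bigr),$$
and use \eqref{eq3} together with the normalizations $\langle v_l,Jv_l'\rangle=1$ to verify that $\tilde v_j$ is skew-$\perp$ (and hence $\mathcal{B}$-$\perp$) to every $v_l,v_l'$ with $l>j$, and that $\tilde v_j\in\mathcal{W}_j^\sharp$ since $w_j,v_l,v_l'$ all lie there. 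If $\tilde v_j\ne 0$ I would normalize $v_j:=\tilde v_j/\|\tilde v_j\|_{\mathcal{B}}$; if $\tilde v_j=0$ then $w_j\in\span\{v_l,v_l':l>j\}$ already, and I would choose $v_j$ to be any $\mathcal{B}$-unit in $\mathcal{P}_j$. In either case $v_j\in\mathcal{W}_j^\sharp$ and $w_j\in\span\{v_l,v_l':l\ge j\}$.

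Consequently $\{v_1,\ldots,v_k,v_1',\ldots,v_k'\}$ is a $\mathcal{B}$-orthosymplectic set with $v_j\in\mathcal{W}_j^\sharp$ and $\mathcal{U}\subseteq\mathcal{S}':=\span\{v_1,\ldots,v_k,v_1',\ldots,v_k'\}$. Since $\dim\mathcal{S}'=2k>2(k-1)=\dim\mathcal{U}$, at least one $v_l$ lies outside $\mathcal{U}$, and I would set
$$v\;:=\;v_l\;-\;\sum_{i=1}^{k-1}\langle w_i,Jv_l\rangle\,w_i',$$
which manifestly lies in $\mathcal{W}_1^\sharp$ and satisfies $\langle w_i,Jv\rangle=0$ for $i=1,\ldots,k-1$. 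Both $\mathcal{U}$ and $\span\{v_l,v_l'\}$ are $'$-closed, hence so is $\mathcal{U}\cap\span\{v_l,v_l'\}$; being a $'$-closed subspace of the $2$-dimensional $\span\{v_l,v_l'\}$ that does not contain $v_l$, this intersection is $\{0\}$, and so $\mathcal{U}+\span\{v,v'\}=\mathcal{U}+\span\{v_l,v_l'\}=\mathcal{S}'$, giving the required basis. The main obstacle I anticipate is the degenerate step $\tilde v_j=0$; it is resolved by the $2$-dimensional fallback in $\mathcal{P}_j$ together with the observation that in this case $w_j$ is already absorbed into the later $v_l$'s, preserving $\mathcal{U}\subseteq\mathcal{S}'$.
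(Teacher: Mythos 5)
Your proposal is correct, and it takes a genuinely different route from the paper's. The paper proves Lemma~\ref{lemr1} by induction on $k$: after a hand-worked base case $k=2$, the inductive step invokes the hypothesis on the shorter chain $\mathcal{W}_2\supseteq\cdots\supseteq\mathcal{W}_k$ to produce an auxiliary vector $u$ and a basis $\{x_2,\ldots,x_k\}$, and then splits into three cases ($u\in\mathcal{U}$; $u\notin\mathcal{U}$ but $u\in\mathcal{U}^{\perp_s}$; $u\notin\mathcal{U}$ and $u\notin\mathcal{U}^{\perp_s}$) to manufacture $v_1$ and $v$. You instead run a single direct downward ``symplectic Gram--Schmidt'' pass seeded by the $w_j$, using the two structural facts that $\mathcal{W}_j^\sharp$ is $'$-closed of dimension $\ge 2(k-j+1)$ and that skew-orthogonality to the previously built $\{v_l,v_l':l>j\}$ costs at most $2(k-j)$ conditions, so that $\mathcal{P}_j$ always has room; the degenerate case $\tilde v_j=0$ is handled cleanly because $w_j$ is then already in the span of the later $v_l$'s. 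The final step, recovering $v$ from some $v_l\notin\mathcal{U}$ by subtracting its $\mathcal{U}$-skew-component $\sum_i\langle w_i,Jv_l\rangle w_i'$ and observing that a proper $'$-closed subspace of the $2$-dimensional $'$-closed $\span\{v_l,v_l'\}$ must be trivial, is also sound. What your approach buys is a non-recursive, case-free construction that makes the role of each dimension count transparent; what the paper's induction buys is a somewhat shorter write-up that reuses the statement of the lemma itself as the engine, which fits naturally with how it is then called from Theorem~\ref{thmr2}. Both establish exactly the same conclusion, including the extra data $v\in\mathcal{W}_1^\sharp\cap\{w_1,\ldots,w_{k-1}\}^{\perp_s}$ with $\mathcal{U}+\span\{v,v'\}=\span\{v_1,\ldots,v_k,v_1',\ldots,v_k'\}$.
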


\begin{proof}
Since $\dim\mathcal{W}_j\ge n+(k-j+1),$
$\dim\mathcal{W}_j^\sharp\ge 2(k-j+1),$ $1\le j\le k.$
We prove the result by induction on $k.$
Let $k=2,$ and let $w_1\in\mathcal{W}_1^\sharp.$
Then $\mathcal{U}=\span\{w_1,w_1^\prime\}.$
We know that $\dim\mathcal{W}_1^\sharp\ge 4$ and $\dim\, \mathcal{U}^{\perp_s}=2n-2.$
So, if $w_1\in\mathcal{W}_2^\sharp,$ then there exists a $v\in\mathcal{W}_1^\sharp\cap\mathcal{U}^{\perp_s}.$
The set $\{v,w_1,v^\prime,w_1^\prime\}$ is the required $\mathcal{B}$-orthosymplectic basis of $\mathcal{U}+\span\{v,v^\prime\}.$
Now, suppose that $w_1\notin\mathcal{W}_2^\sharp.$
Since $\dim\mathcal{W}_2^\sharp\ge 2,$
we can find a $v\in\mathcal{W}_2^\sharp\cap\{w_1\}^{\perp_s}.$
By the classical Gram-Schmidt orthogonalisation on $\{v,w_1\}$ in the space $(\mathcal{V},\langle\cdot,\cdot\rangle_\mathcal{B}),$
we can find a $\mathcal{B}$-orthonormal, skew-orthogonal elements $v_2,v_1$ in $\mathcal{W}_2$ and $\mathcal{W}_1,$ respectively,
such that $\span\{w_1,v\}=\span\{v_1,v_2\}.$
Hence $\{v_1,v_2,v_1^\prime,v_2^\prime\}$ forms the required $\mathcal{B}$-orthosymplectic basis of $\mathcal{U}+\span\{v,v^\prime\}.$

Assume that the result holds for $k-1.$
We prove it for $k.$
By the induction hypotheses,
there exists a vector $u$ in $\mathcal{W}_2^\sharp\cap\{w_2,\ldots,w_{k-1}\}^{\perp_s}$ such that
$$\mathcal{S}=\span\{w_2,\ldots,w_{k-1},u,w_2^\prime,\ldots,w_{k-1}^\prime,u^\prime\}=\span\{x_2,\ldots,x_k,x_2^\prime,\ldots,x_k^\prime\}$$
for some $\mathcal{B}$-orthonormal, skew-orthogonal $x_j\in\mathcal{W}_j^\sharp,$ $2\le j\le k.$
Suppose $\mathcal{U}$ is the symplectic space spanned by $w_1,\ldots,w_{k-1},w_1^\prime,\ldots,w_{k-1}^\prime.$
Two cases arise depending on $u\in\mathcal{U}$ or $u\notin\mathcal{U}.$
Consider the case $u\in\mathcal{U}.$
Since $\mathcal{S}\subseteq\mathcal{U}$ and $\dim\mathcal{S}=\dim\, \mathcal{U}=2(k-1),$
$\mathcal{S}=\mathcal{U}.$
We know that $\dim\mathcal{W}_1^\sharp\ge 2k$ and $\dim\mathcal{U}^{\perp_s}=2(n-k+1).$
Hence there exists a vector $v$ in $\mathcal{W}_1^\sharp\cap\mathcal{U}^{\perp_s}$ with $\|v\|_\mathcal{B}=1.$
Since $v$ is skew-orthogonal to each element of $\mathcal{U},$
it is $\mathcal{B}$-orthogonal and skew-orthogonal to all $x_j$'s.
Take $v_1=v$ and $v_j=x_j,$ $2\le j\le k.$
Then $\{v_1,\ldots,v_k,v_1^\prime,\ldots,v_k^\prime\}$ is the required $\mathcal{B}$-orthosymplectic basis of $\mathcal{U}+\span\{v,v^\prime\}.$

Now consider the case $u\notin\mathcal{U}.$
If $u\in\mathcal{U}^{\perp_s},$
then take $v=u.$
As in the preceding paragraph,
we see that $\{v,x_2,\ldots,x_k,v^\prime,x_2^\prime,\ldots,x_k^\prime\}$ forms the required $\mathcal{B}$-orthosymplectic basis of $\mathcal{U}+\span\{v,v^\prime\}.$
So, let $u\notin\mathcal{U}^{\perp_s}.$
We can write $u=u_1+u_2,$ where $u_1\in\mathcal{U}$ and $u_2\in\mathcal{U}^{\perp_s}.$
Since $u,u_1\in\mathcal{W}_1^\sharp,$ $u_2$ also belongs to $\mathcal{W}_1^\sharp.$
Take $v=u_2,$ and consider the space $\mathcal{U}_0=\mathcal{U}+\span\{v,v^\prime\}.$
Clearly, $u\in\mathcal{U}_0$ and $\{x_2,\ldots,x_k\}$ is a $\mathcal{B}$-orthonormal, skew-orthogonal set in $\mathcal{U}_0.$
Since $\mathcal{U}_0$ is a symplectic space and $\dim\mathcal{U}_0=2k>\dim\mathcal{S},$
we can find a vector $v_1$ in $\mathcal{U}_0$ with $\|v_1\|_\mathcal{B}=1$
that is $\mathcal{B}$-orthogonal and skew-orthogonal to $x_2,\ldots,x_k.$
Taking $v_j=x_j$ for $2\le j\le k,$
we can see that 
$\{v_1,\ldots,v_k,v_1^\prime,\ldots,v_k^\prime\}$ is the required $\mathcal{B}$-orthosymplectic basis of $\mathcal{U}_0.$
\end{proof}

\begin{thm}\label{thmr2}
Let $\mathcal{B}$ be a symplectic basis of $\mathbb{R}^{2n}.$
Let $\mathcal{V}_1\subset\cdots\subset\mathcal{V}_k$ be an increasing chain of subspaces,
and let $\mathcal{W}_1\supset\cdots\supset\mathcal{W}_k$ be a decreasing chain of subspaces of $\mathbb{R}^{2n}$
such that $\dim\mathcal{V}_j=n+i_j$ and $\dim\mathcal{W}_j=2n-i_j+1$ for $j=1,\ldots,k,$
and $1\le i_1<\cdots < i_k\le n.$
Then we can find two $\mathcal{B}$-orthosymplectic sets
$\{v_1,\ldots,v_k,v_1^\prime,\ldots,v_k^\prime\}$ and $\{w_1,\ldots,w_k,w_1^\prime,\ldots,w_k^\prime\}$
that have the same span and are such that
$v_j\in\mathcal{V}_j^\sharp$ and $w_j\in\mathcal{W}_j^\sharp$ for all $j=1,\ldots,k.$
\end{thm}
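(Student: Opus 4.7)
My plan is to prove Theorem \ref{thmr2} by induction on $k$, building a common $\mathcal{B}$-complement-closed $2k$-dimensional symplectic subspace $\mathcal{U}_k$ two dimensions at a time. For the base case $k=1$, the dimension formula gives $\dim\mathcal{V}_1+\dim\mathcal{W}_1=(n+i_1)+(2n-i_1+1)=3n+1$, so $\dim(\mathcal{V}_1\cap\mathcal{W}_1)\ge n+1$. By the preliminary remark from Section 2 that subspaces of dimension exceeding $n$ in $\mathbb{R}^{2n}$ have nontrivial $\sharp$-part, there is a nonzero $v\in(\mathcal{V}_1\cap\mathcal{W}_1)^\sharp$; after normalising $\|v\|_\mathcal{B}=1$, both $v$ and $v^\prime$ lie in $\mathcal{V}_1\cap\mathcal{W}_1$, so $v\in\mathcal{V}_1^\sharp\cap\mathcal{W}_1^\sharp$, and $v_1=w_1=v$ works.

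For the inductive step, I apply the theorem for $k-1$ to the truncated chains $\mathcal{V}_1\subset\cdots\subset\mathcal{V}_{k-1}$ and $\mathcal{W}_1\supset\cdots\supset\mathcal{W}_{k-1}$ (with the same indices $i_1<\cdots<i_{k-1}$, which still satisfy the hypotheses) to produce a $\mathcal{B}$-complement-closed $2(k-1)$-dimensional symplectic subspace $\mathcal{U}_{k-1}$ carrying both bases. Since $\mathcal{V}_k^\sharp$, $\mathcal{W}_k^\sharp$ and $\mathcal{U}_{k-1}^{\perp_s}$ (which has dimension $2(n-k+1)$ and is itself a symplectic subspace closed under $\mathcal{B}$-complement) are each $\mathcal{B}$-complement-closed, the remaining task reduces to finding a nonzero vector $v$ in the triple intersection $\mathcal{V}_k^\sharp\cap\mathcal{W}_k^\sharp\cap\mathcal{U}_{k-1}^{\perp_s}$: for any such $v$, the 2-dim subspace $\mathcal{E}=\span\{v,v^\prime\}$ lies in $\mathcal{V}_k^\sharp\cap\mathcal{W}_k^\sharp$, and $\mathcal{U}_k=\mathcal{U}_{k-1}\oplus\mathcal{E}$ with $v_k=w_k=v$ completes the induction.

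The hardest step will be establishing that this triple intersection is nonzero. The unconditional bound $\dim(\mathcal{V}_k\cap\mathcal{W}_k)^\sharp\ge 2$ (from $\dim(\mathcal{V}_k\cap\mathcal{W}_k)\ge n+1$) need not survive intersection with $\mathcal{U}_{k-1}^{\perp_s}$ for larger $k$, since in principle $(\mathcal{V}_k\cap\mathcal{W}_k)^\sharp$ could lie entirely inside $\mathcal{U}_{k-1}$; the inductive choice of $\mathcal{U}_{k-1}$ must be controlled. I expect to handle this with a case analysis patterned on the proof of Lemma \ref{lemr1}: first pick $v\in\mathcal{V}_k^\sharp\cap\mathcal{U}_{k-1}^{\perp_s}$, which is available because $\dim(\mathcal{V}_k^\sharp\cap\mathcal{U}_{k-1}^{\perp_s})\ge 2i_k+(2n-2k+2)-2n=2(i_k-k+1)\ge 2$. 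If $v\in\mathcal{W}_k^\sharp$ we are done; otherwise choose an auxiliary $u\in(\mathcal{V}_k\cap\mathcal{W}_k)^\sharp$, decompose $u=u_1+u_2$ along the $\mathcal{B}$-orthogonal (equivalently, skew-orthogonal) splitting $\mathcal{U}_{k-1}\oplus\mathcal{U}_{k-1}^{\perp_s}$, and apply a Gram-Schmidt-type reorthogonalisation inside $\span\{v,v^\prime,u_2,u_2^\prime\}$; if needed, the inductive basis of $\mathcal{U}_{k-1}$ is reshuffled (leaving $\mathcal{U}_{k-1}$ itself fixed) so that $u_2\ne 0$ and the triple intersection acquires a nonzero vector, yielding the 2-dimensional common extension $\mathcal{E}$.
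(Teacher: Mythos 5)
Your proposal diverges from the paper's proof in a way that creates a genuine gap. You attempt to extend both $\mathcal{B}$-orthosymplectic sets simultaneously by a single \emph{common} vector $v$ (setting $v_k=w_k=v$), which would require a nonzero $v\in\mathcal{V}_k^\sharp\cap\mathcal{W}_k^\sharp\cap\mathcal{U}_{k-1}^{\perp_s}$. You correctly flag that the dimension count does not yield this, but the fallback sketch does not repair it. First, the projection $u_2$ of $u$ onto $\mathcal{U}_{k-1}^{\perp_s}$ is determined by the subspace $\mathcal{U}_{k-1}$ alone (the $\mathcal{B}$-orthogonal splitting $\mathcal{U}_{k-1}\oplus\mathcal{U}_{k-1}^{\perp_s}$ is fixed once $\mathcal{U}_{k-1}$ is), so ``reshuffling the inductive basis of $\mathcal{U}_{k-1}$'' cannot make $u_2\neq 0$: if $(\mathcal{V}_k\cap\mathcal{W}_k)^\sharp\subseteq\mathcal{U}_{k-1}$, every choice of $u$ gives $u_2=0$ and the plan stalls. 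Second, even when $u_2\neq 0$, it need not lie in $\mathcal{W}_k^\sharp$: one has $u\in\mathcal{W}_k^\sharp$, but the component $u_1\in\mathcal{U}_{k-1}$ lies in $\span\{w_1,\dots,w_{k-1},w_1',\dots,w_{k-1}'\}$ where $w_j\in\mathcal{W}_j^\sharp$ with $\mathcal{W}_j\supsetneq\mathcal{W}_k$, so $\mathcal{U}_{k-1}\not\subseteq\mathcal{W}_k^\sharp$ in general, and $u_2=u-u_1$ can fall outside $\mathcal{W}_k^\sharp$. Thus the Gram--Schmidt step inside $\span\{v,v',u_2,u_2'\}$ does not manufacture the desired vector in the triple intersection.

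The paper's proof sidesteps the need for a common extension entirely. It keeps the $v$-side incremental but \emph{rebuilds the whole $w$-side} at each step: it introduces the auxiliary chain $\mathcal{S}_j=\mathcal{V}_k\cap\mathcal{W}_j$, observes that $\dim\mathcal{S}_j\ge n+k-j+1$ and that the inductively obtained $x_1,\dots,x_{k-1}$ already lie in $\mathcal{S}_1^\sharp,\dots,\mathcal{S}_{k-1}^\sharp$, and then invokes Lemma~\ref{lemr1} on the chain $\mathcal{S}_1\supseteq\cdots\supseteq\mathcal{S}_k$ to produce one new vector $v$ and a \emph{fresh} $\mathcal{B}$-orthosymplectic basis $\{w_1,\dots,w_k,w_1',\dots,w_k'\}$ of $\span\{x_1,\dots,x_{k-1},v,\dots\}$ with $w_j\in\mathcal{S}_j^\sharp\subseteq\mathcal{W}_j^\sharp$. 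Because every $\mathcal{S}_j\subseteq\mathcal{V}_k$, this new $w$-basis automatically lives inside $\mathcal{V}_k$, and a single Gram--Schmidt on $x_1,\dots,x_{k-1},v$ then extracts a $v_k\in\mathcal{S}_1^\sharp\subseteq\mathcal{V}_k^\sharp$, $\mathcal{B}$-orthogonal and skew-orthogonal to $\mathcal{U}$, completing the $v$-side. Crucially, $v_k$ and $w_k$ are allowed to differ, which is exactly the freedom your plan gives up. If you want to salvage your approach, the missing idea is to intersect the $\mathcal{W}$-chain with $\mathcal{V}_k$ before invoking Lemma~\ref{lemr1}, and to accept that the $w$-basis is replaced rather than extended.
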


\begin{proof}
We first note that for every $j=1,\ldots,k,$ $\dim\mathcal{V}_j^\sharp\ge 2i_j$ and $\dim\mathcal{W}_j^\sharp\ge 2(n-i_j+1).$
We use induction on $k$ to prove the theorem.
When $k=1,$ there exists an element $x\in\mathcal{V}_1^\sharp\cap\mathcal{W}_1^\sharp$ with $\|x\|_\mathcal{B}=1.$
Take $v_1=w_1=x$ to obtain the result for $k=1.$

\sloppy
Assume that the result holds for $k-1.$
Then there exist two $\mathcal{B}$-orthosymplectic sets $\{v_1,\ldots,v_{k-1},v_1^\prime,\ldots,v_{k-1}^\prime\}$
and $\{x_1,\ldots,x_{k-1},x_1^\prime,\ldots,x_{k-1}^\prime\}$
that have the same span $\mathcal{U}$ and are such that $v_j\in\mathcal{V}_j^\sharp$ and $x_j\in\mathcal{W}_j^\sharp$ for all $j=1,\ldots,k-1.$
We now show that there exist vectors $v_k\in\mathcal{V}_k^\sharp$ and $w_j\in\mathcal{W}_j^\sharp,$ $1\le j\le k,$
such that the vectors $v_1,\ldots,v_k$ and $w_1,\ldots,w_k$ satisfy the required properties.

\sloppy
For $j=1,\ldots,k$ consider the subspace $\mathcal{S}_j=\mathcal{V}_k\cap\mathcal{W}_j.$
Clearly $\dim\mathcal{S}_j\ge n+i_k-i_j+1\ge n+k-j+1.$
Also for all $j=1,\ldots,k-1,$ $x_j\in\mathcal{S}_j.$
By Lemma \ref{lemr1}
we can find a $v\in\mathcal{S}_1^\sharp\cap\{x_1,\ldots,x_{k-1}\}^{\perp_s}$
such that the space spanned by $\{x_1,\ldots,x_{k-1},v,x_1^\prime,\ldots,x_{k-1}^\prime,v^\prime\}$
has a $\mathcal{B}$-orthosymplectic basis
$\{w_1,\ldots,w_k,w_1^\prime,\ldots,w_k^\prime\},$ where $w_j\in\mathcal{S}_j^\sharp$ for all $j=1,\ldots,k.$
By the classical Gram-Schmidt orthogonalisation on $x_1,\ldots,x_{k-1},v$
in the space $(\mathbb{R}^{2n},\langle\cdot,\cdot\rangle_\mathcal{B}),$
we get a $\mathcal{B}$-unit vector $v_k\in\mathcal{S}_1^\sharp\subseteq\mathcal{V}_k^\sharp$
that is $\mathcal{B}$-orthogonal to all $x_1,\ldots,x_{k-1},$
and is such that $\span\{x_1,\ldots,x_{k-1},v\}=\span\{x_1,\ldots,x_{k-1},v_k\}.$
Since $\{x_1,\ldots,x_{k-1},v\}$ is a skew-orthogonal set, $v_k$ is also skew-orthogonal to all $x_j$'s, and consequently to the space $\mathcal{U}.$
Now the set $\{v_1,\ldots,v_{k-1},v_1^\prime,\ldots,v_{k-1}^\prime\}$ is a $\mathcal{B}$-orthosymplectic basis of $\mathcal{U}.$
Thus $\{v_1,\ldots,v_k,v_1^\prime,\ldots,v_k^\prime\}$ is a $\mathcal{B}$-orthosymplectic set that has the same span as $\{w_1,\ldots,w_k,w_1^\prime,\ldots,w_k^\prime\}.$
This proves the theorem.
\end{proof}

\begin{prop}\label{propr3}
Let $\mathcal{B}$ be a symplectic eigenbasis of $\mathbb{R}^{2n}$
corresponding to the symplectic eigenvalues $d_1\le\cdots\le d_n$ of a $2n\times 2n$ real positive definite matrix $A.$
Let $\{x_1,\ldots,x_k,x_1^\prime,\ldots,x_k^\prime\}$
and $\{v_1,\ldots,v_k,v_1^\prime,\ldots,v_k^\prime\}$
be two $\mathcal{B}$-orthosymplectic sets in $\mathbb{R}^{2n}$that have the same span.
 Then
$$\sum\limits_{j=1}^{k}\left(\langle x_j,Ax_j\rangle+\langle x_j^\prime,Ax_j^\prime\rangle\right)=\sum\limits_{j=1}^{k}\left(\langle v_j,Av_j\rangle+\langle v_j^\prime,Av_j^\prime\rangle\right).$$
\end{prop}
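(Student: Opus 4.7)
The plan is to recognise the claim as a trace-invariance identity, in which neither the eigenbasis structure of $\mathcal{B}$ nor the detailed action of $A$ really enters. By definition a $\mathcal{B}$-orthosymplectic set is $\mathcal{B}$-orthonormal, so both $\{x_1,\ldots,x_k,x_1^\prime,\ldots,x_k^\prime\}$ and $\{v_1,\ldots,v_k,v_1^\prime,\ldots,v_k^\prime\}$ are $\mathcal{B}$-orthonormal bases of their common $2k$-dimensional span $\mathcal{W}$. The stated equality is therefore equivalent to the assertion that
$$S(\mathcal{W}):=\sum_{i=1}^{2k}\langle y_i,Ay_i\rangle$$
is independent of the choice of $\mathcal{B}$-orthonormal basis $\{y_1,\ldots,y_{2k}\}$ of $\mathcal{W}$.

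My plan for proving this invariance is a direct change-of-basis computation. If $\{y_i\}$ and $\{z_i\}$ are two $\mathcal{B}$-orthonormal bases of $\mathcal{W}$, write $z_i=\sum_j c_{ji}y_j$; the $\mathcal{B}$-orthonormality of both bases forces $C^TC=I$ for the $2k\times 2k$ matrix $C=(c_{ji})$, and because $C$ is square this also gives $CC^T=I$. Expanding and swapping the order of summation,
$$\sum_{i=1}^{2k}\langle z_i,Az_i\rangle=\sum_{j,l}\left(\sum_i c_{ji}c_{li}\right)\langle y_j,Ay_l\rangle=\sum_{j,l}(CC^T)_{jl}\,\langle y_j,Ay_l\rangle=\sum_{j=1}^{2k}\langle y_j,Ay_j\rangle,$$
which is the required invariance. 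Applying this to the two bases in the statement finishes the proof.

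I do not foresee a genuine obstacle; this is really just basis-independence of the trace. The one minor subtlety is that two inner products are in play, the standard one inside $\langle\cdot,A\cdot\rangle$ and the $\mathcal{B}$-inner product governing orthonormality, but the computation above never mixes them, since only the identity $CC^T=I$ is used. An equivalent repackaging would let $T$ be the symplectic matrix whose columns form $\mathcal{B}$, so that $T^{-1}y_i$ is standard-orthonormal in $T^{-1}\mathcal{W}$; then $S(\mathcal{W})=\tr(P\,T^TAT)$ for $P$ the standard orthogonal projection onto $T^{-1}\mathcal{W}$, making the invariance manifest. I note in passing that the hypothesis that $\mathcal{B}$ is a symplectic eigenbasis of $A$ is not actually used in the proof; it is presumably in place so that the proposition can be inserted directly into the proof of Theorem \ref{thm_wndt}.
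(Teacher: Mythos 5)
Your proof is correct, and it takes a genuinely different and more economical route than the paper's. The paper exploits the hypothesis that $\mathcal{B}=\{w_1,\ldots,w_n,z_1,\ldots,z_n\}$ is a symplectic eigenbasis of $A$: writing $x=\sum_i(\alpha_iw_i+\beta_iz_i)$ it first computes $\langle x,Ax\rangle=\sum_i d_i(\alpha_i^2+\beta_i^2)$, introduces the auxiliary operator $\wt{D}$ with $\wt{D}w_i=d_iw_i$ and $\wt{D}z_i=d_iz_i$ (diagonal, hence self-adjoint, for $\langle\cdot,\cdot\rangle_\mathcal{B}$), verifies the identity $\langle x,Ax\rangle=\langle x,\wt{D}x\rangle_\mathcal{B}$, and only then invokes basis-independence of $\sum_i\langle e_i,\wt{D}e_i\rangle_\mathcal{B}$ over $\mathcal{B}$-orthonormal bases of the common span. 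Your proof omits the detour through $\wt{D}$ and establishes the needed invariance directly: the change-of-basis matrix $C$ between two $\mathcal{B}$-orthonormal bases of the same subspace satisfies $C^TC=I$, hence $CC^T=I$ since $C$ is square, and the double-sum telescopes. This makes it visible that neither the eigenbasis structure nor the positivity of $A$ is needed; the identity holds for any $2n\times 2n$ matrix $A$ and any symplectic basis $\mathcal{B}$. In effect your computation is the proof of the invariance that the paper invokes but does not spell out, applied directly to $A$ rather than to $\wt{D}$; the paper's detour does have the side benefit of exhibiting the equality $\langle x,Ax\rangle=\langle x,\wt{D}x\rangle_\mathcal{B}$, which connects the quadratic form of $A$ to the $\mathcal{B}$-geometry, but for the proposition itself your argument is cleaner and strictly more general.
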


\begin{proof}
Let $\mathcal{B}=\{w_1,\ldots,w_n,z_1,\ldots,z_n\}.$
We can write each vector $x$ in $\mathbb{R}^{2n}$ as
$$x=\sum\limits_{i=1}^{n}\left(\alpha_iw_i+\beta_iz_i\right).$$
Then
\begin{equation}
\langle x,Ax\rangle=\sum\limits_{i=1}^{n}d_i\left(\alpha_i^2+\beta_i^2\right).\label{eq31}
\end{equation}
Let $\wt{D}$ be the operator on $\mathbb{R}^{2n}$ defined as
$$\wt{D}w_i=d_iw_i\textrm{ and }\wt{D}z_i=d_iz_i,$$
$1\le i\le n.$
Clearly, $\wt{D}$ is a diagonal operator on $(\mathbb{R}^{2n},\langle\cdot,\cdot\rangle_\mathcal{B}).$
We have $\wt{D}x=\sum\limits_{i=1}^{n}d_i\left(\alpha_iw_i+\beta_iz_i\right).$
By using the definition of $\langle\cdot,\cdot\rangle_\mathcal{B}$ and equality \eqref{eq31},
we get
\begin{equation}
\langle x,\wt{D}x\rangle_\mathcal{B}=\sum\limits_{i=1}^{n}d_i\left(\alpha_i^2+\beta_i^2\right)=\langle x,Ax\rangle.\label{eq32}
\end{equation}
Since $\{x_1,\ldots,x_k,x_1^\prime,\ldots,x_k^\prime\}$ and $\{v_1,\ldots,v_k,v_1^\prime,\ldots,v_k^\prime\}$ are $\mathcal{B}$-orthonormal and have the same span, we have
\begin{equation}
\sum\limits_{i=1}^{k}\left(\langle x_i,\wt{D}x_i\rangle_\mathcal{B}+\langle x_i^\prime,\wt{D}x_i^\prime\rangle_\mathcal{B}\right)=\sum\limits_{i=1}^{k}\left(\langle v_i,\wt{D}v_i\rangle_\mathcal{B}+\langle v_i^\prime,\wt{D}v_i^\prime\rangle_\mathcal{B}\right).\label{eq33}
\end{equation}
Thus by using the relations \eqref{eq32} and \eqref{eq33}, we obtain the proposition.
\end{proof}
\vskip.1in

\noindent{\bf Proof of Theorem \ref{thm_wndt}}:
Let $\mathcal{B}=\{u_1,\ldots,u_n,v_1,\ldots,v_n\}$ be a symplectic eigenbasis of $\mathbb{R}^{2n}$ corresponding to the symplectic eigenvalues $d_1,\ldots,d_n$ of $A.$
Let $\mathcal{M}_j$ be the $2n-i_j+1$-dimensional subspace spanned by
$\{u_1,\ldots,u_n,$
$v_{i_j},\ldots,v_n\}.$
Let $1\le j\le k,$ and let $(x,y)$ be any normalised symplectic pair of vectors in $\mathcal{M}_j.$
By \eqref{eqs1}, we see that
$$d_{i_j}\le \frac{\langle x,Ax\rangle+\langle y,Ay\rangle}{2}.$$
Hence, for any symplectically orthonormal set
$\{x_1,\ldots,x_k,y_1,\ldots,y_k\}$ with $x_j,y_j\in\mathcal{M}_j,$
we have
$$\sum\limits_{j=1}^{k}d_{i_j}\le \sum\limits_{j=1}^{k}\frac{\langle x_j,Ax_j\rangle+\langle y_j,Ay_j\rangle}{2}.$$
The two sides are equal when we take $x_j=u_{i_j}$ and $y_j=v_{i_j},$ $1\le j\le k.$
Thus
$$\sum\limits_{j=1}^{k}d_{i_j}={\underset{\underset{\underset{\textrm{symp. o. n.}}{\{x_1,\ldots,x_k,y_1,\ldots,y_k\}}}{x_j,y_j\in\mathcal{M}_j}}{\min}}\sum\limits_{j=1}^{k}\frac{\langle x_j,Ax_j\rangle+\langle y_j,Ay_j\rangle}{2}.$$
This shows that the left hand side of \eqref{eqwndt} is less than or equal to its right hand side.

Now let $\mathcal{W}_1\supset\cdots\supset\mathcal{W}_k$ be a decreasing chain of subspaces with
$\dim\mathcal{W}_j=2n-i_j+1.$
Let $\mathcal{V}_j$ be the subspaces spanned by
$\{u_1,\ldots,u_n,$
$v_1,\ldots,$$v_{i_j}\}.$
Clearly $\dim\mathcal{V}_j=n+i_j.$
By Theorem \ref{thmr2}
we can find two $\mathcal{B}$-orthosymplectic sets
$\{x_1,\ldots,x_k,$$x_1^\prime,\ldots,x_k^\prime\}$ and $\{w_1,\ldots,w_k,$$w_1^\prime,\ldots,w_k^\prime\}$
that have the same span $\mathcal{U}$ and are such that $x_j,x_j^\prime\in\mathcal{W}_j$
and $w_j,w_j^\prime\in\mathcal{V}_j$ for all $j=1,\ldots,k.$
Since both $w_j,w_j^\prime$ belong to $\mathcal{V}_j$ and $\mathcal{V}_j$ is spanned by
$\{u_1,\ldots,u_n,v_1,\ldots,v_{i_j}\},$
we must have
$$w_j=\sum\limits_{m=1}^{i_j}\left(\alpha_{jm}u_m+\beta_{jm}v_m\right).$$
Then by using the same argument as in \eqref{eqrp1} and by using Proposition \ref{propr3} we have
\begin{eqnarray*}
\sum\limits_{j=1}^{k}d_{i_j}&\ge& \sum\limits_{j=1}^{k}\frac{\langle w_j,Aw_j\rangle+\langle w_j^\prime,Aw_j^\prime\rangle}{2}\\
&=&\sum\limits_{j=1}^{k}\frac{\langle x_j,Ax_j\rangle+\langle x_j^\prime,Ax_j^\prime\rangle}{2}.
\end{eqnarray*}
This gives
$$\sum\limits_{j=1}^{k}d_{i_j}\ge{\underset{\underset{\underset{\textrm{symp. o. n.}}{\{x_1,\ldots,x_k,y_1,\ldots,y_k\}}}{x_j,y_j\in\mathcal{W}_j}}{\min}}\sum\limits_{j=1}^{k}\frac{\langle x_j,Ax_j\rangle+\langle y_j,Ay_j\rangle}{2}.$$
 This proves equality \eqref{eqwndt}.
\qed
\vskip.1in


\vskip.1in

\noindent{\bf Proof of Corollary \ref{cor_lid}}:
Let $\mathcal{W}_1\supset\cdots\supset\mathcal{W}_k$ be a decreasing chain of subspaces with $\dim\mathcal{W}_j=2n-i_j+1$
such that
\begin{equation}
\sum\limits_{j=1}^{k}d_{i_j}(A)={\underset{\underset{\underset{\textrm{symp. o. n.}}{\{u_1,\ldots,u_k,v_1,\ldots,v_k\}}}{u_j,v_j\in\mathcal{W}_j}}{\min}}\sum\limits_{j=1}^{k}\frac{\langle u_j,Au_j\rangle+\langle v_j,Av_j\rangle}{2}.\label{eqlid1}
\end{equation}
By Theorem 5 of \cite{bj}, we have
\begin{equation}
\sum\limits_{j=1}^{k}d_j(B)\le \sum\limits_{j=1}^{k}\frac{\langle u_j,Bu_j\rangle+\langle v_j,Bv_j\rangle}{2}\label{eqlid2}
\end{equation}
for all symplectically orthonormal vectors $u_1,\ldots,u_k,v_1,\ldots,v_k.$
Now, by Theorem \ref{thm_wndt} and relations \eqref{eqlid1} and \eqref{eqlid2}, we obtain
\begin{eqnarray*}
\sum\limits_{j=1}^{k}d_{i_j}(A+B) &\ge & {\underset{\underset{\underset{\textrm{symp. o. n.}}{\{u_1,\ldots,u_k,v_1,\ldots,v_k\}}}{u_j,v_j\in\mathcal{W}_j}}{\min}}\frac{\langle u_j,(A+B)u_j\rangle+\langle v_j,(A+B)v_j\rangle}{2}\\
&\ge & \sum\limits_{j=1}^{k}d_{i_j}(A)+\sum\limits_{j=1}^{k}d_j(B).
\end{eqnarray*}
\qed

\section{Proof of Theorem \ref{thmmlid}}

Let $\alpha=(\alpha_1,\ldots,\alpha_n)$ be a real vector. The vector $\alpha^\uparrow$ denotes the vector $(\alpha_1^\uparrow,\ldots,\alpha_n^\uparrow)$ obtained by arranging the components of $x$ in increasing order.
For any two vectors $\alpha,\beta,$ $\alpha\le \beta$ if $\alpha_i^\uparrow\le \beta_i^\uparrow $ for all $i=1,\ldots,n.$
We say $\alpha$ is {\it supermajorised} by $\beta,$ in symbols $\alpha\prec^w \beta,$ if for all $k=1,\ldots,n,$ 
\begin{equation}
 \sum_{j=1}^{k} \, \alpha_j^{\uparrow} \ge \sum_{j=1}^{k} \,\beta_j^{\uparrow}.\label{eqmaj} 
\end{equation}
 Further $\alpha$ is {\it majorised} by $\beta$ (or $\beta$ {\it majorises} $\alpha$)
if the two sides in \eqref{eqmaj} are equal when $k=n.$

A function $\varphi:\mathbb{R}^n\to\mathbb{R}$ is called {\it Schur concave} if for every $\alpha,\beta$ in $\mathbb{R}^n,$
$$\alpha\prec \beta\, \implies\, \varphi(\alpha)\ge\varphi(\beta).$$
The function $\varphi$ is called monotonically increasing if $\alpha\le \beta$ implies $\varphi(\alpha)\le\varphi(\beta).$
We refer the reader to \cite[Theorem 8.8, p.87]{mo} for the following lemma.

\begin{lem}\label{lemwg1}
Let $\varphi:\mathbb{R}^n_+\to\mathbb{R}$ be a Schur-concave and monotonically increasing function.
Then for any two vectors $x,$ $y$ in $\mathbb{R}^n_+$ with $x\prec^w y,$
we have $\varphi(x)\ge \varphi(y).$
\end{lem}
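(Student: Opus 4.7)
The plan is to bridge the weak supermajorization hypothesis $x\prec^w y$ by an auxiliary vector $z$ that is simultaneously majorized by $x$ and lies componentwise above $y$; once such a $z$ is in hand, Schur-concavity and monotonicity can be chained to yield the desired inequality $\varphi(x)\ge\varphi(y)$.

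First, I would observe that any Schur-concave function is automatically permutation-invariant (if $\sigma$ is a permutation, then $\sigma\alpha$ and $\alpha$ have the same increasing rearrangement, so $\alpha\prec\sigma\alpha$ and $\sigma\alpha\prec\alpha$ both hold, forcing $\varphi(\alpha)=\varphi(\sigma\alpha)$). Consequently, replacing $x$ and $y$ by $x^\uparrow$ and $y^\uparrow$ changes neither side of the desired inequality, and I would assume for the remainder of the proof that $x=x^\uparrow$ and $y=y^\uparrow$ are already arranged in increasing order. Let $s=\sum_{j=1}^n x_j-\sum_{j=1}^n y_j$; this is nonnegative since the $k=n$ instance of $x\prec^w y$ reads $\sum_j x_j\ge\sum_j y_j$. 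Define $z=(y_1,\ldots,y_{n-1},y_n+s)$. Because $s\ge 0$, the vector $z$ is still arranged in increasing order, and $y\le z$ in the paper's sense (comparing increasing rearrangements componentwise).

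Next, I would verify $x\prec z$. For $k<n$ we have $\sum_{j=1}^k z_j^\uparrow=\sum_{j=1}^k y_j^\uparrow\le\sum_{j=1}^k x_j^\uparrow$, which is exactly the weak supermajorization hypothesis; and for $k=n$ the two sides agree by construction of $s$. Hence $z$ majorizes $x$. Applying Schur-concavity to $x\prec z$ gives $\varphi(x)\ge\varphi(z)$, and applying monotonicity to $y\le z$ gives $\varphi(y)\le\varphi(z)$. Combining these two inequalities yields $\varphi(x)\ge\varphi(z)\ge\varphi(y)$, which is the claim.

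There is no real obstacle in the argument; the only delicate point is making sure the paper's conventions line up properly, namely that the partial order $\le$ is defined in terms of increasing rearrangements and that the definition of majorization uses the same convention. Both issues dissolve once we sort $x$ and $y$ at the outset, after which all of $x$, $y$, $z$ are in increasing order and the comparisons reduce to literal componentwise inequalities.
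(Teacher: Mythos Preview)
Your argument is correct. The construction of the intermediate vector $z=(y_1,\ldots,y_{n-1},y_n+s)$ with $s=\sum_j x_j-\sum_j y_j\ge 0$ does exactly what is needed: $z\in\mathbb{R}^n_+$, $y\le z$, and $x\prec z$, after which Schur-concavity and monotonicity chain together to give $\varphi(x)\ge\varphi(z)\ge\varphi(y)$. The preliminary reduction to $x=x^\uparrow$, $y=y^\uparrow$ via permutation-invariance of Schur-concave functions is also sound.

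By way of comparison, the paper does not actually prove this lemma: it simply refers the reader to \cite[Theorem 8.8, p.~87]{mo}. Your proof is the standard self-contained argument one finds in that reference (and is essentially the same device that appears, in a slightly different guise, in some unused material in the paper's source). So there is no methodological divergence to discuss; you have supplied what the paper chose to outsource.
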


We now give a generalisation of Theorem \ref{thm_wndt}.

\begin{thm}\label{thmwg}
Let $1\le k\le n,$ and let $\varphi:[0,\infty)^k\to\mathbb{R}_+$ be a Schur-concave, permutation invariant and a monotonically increasing function.
Let $A$ be a $2n\times 2n$ real positive definite matrix with symplectic eigenvalues $d_1\le \cdots\le d_n.$
Then for every $1\le i_1<\cdots<i_k\le n,$
\begin{equation}
\varphi((d_{i_1},\ldots,d_{i_k}))={\underset{\underset{\dim\mathcal{M}_j=2n-i_j+1}{\mathcal{M}_1\supset\cdots\supset\mathcal{M}_k}}{\max}}{\underset{\underset{\underset{\textrm{symp. o. n.}}{u_j,v_j\in\mathcal{M}_j}}{\mathcal{M}=\span\{u_1,\ldots,u_k,v_1,\ldots,v_k\}}}{\min}}\varphi(d_\mathcal{M}).\label{eqwg1}
\end{equation}
Here $d_\mathcal{M}$ denotes the $k$-vector of symplectic eigenvalues of the real positive definite matrix $A_\mathcal{M}$ obtained by restricting $A$ to $\mathcal{M}.$
\end{thm}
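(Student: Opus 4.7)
The plan is to follow the two-direction structure of the proof of Theorem \ref{thm_wndt}, upgrading the scalar sum estimates there into statements about $\varphi$ via the monotonicity and Schur-concavity collected in Lemma \ref{lemwg1}.

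For the inequality $\varphi((d_{i_1},\ldots,d_{i_k}))\le$ RHS of \eqref{eqwg1}, I would witness the maximum by the same chain $\mathcal{M}_j=\span\{u_1,\ldots,u_n,v_{i_j},\ldots,v_n\}$ used in Theorem \ref{thm_wndt} and aim to establish the componentwise lower bound $d_j(A|_{\mathcal{M}})\ge d_{i_j}$ for every $j$. Given any symplectically orthonormal set $\{x_1,\ldots,x_k,y_1,\ldots,y_k\}$ with $x_j,y_j\in\mathcal{M}_j$ and $\mathcal{M}=\span\{x_j,y_j:1\le j\le k\}$, form the $2(k-j+1)$-dimensional symplectic subspace $\mathcal{N}_j=\span\{x_m,y_m:m\ge j\}$. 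Since $\mathcal{M}_m\subseteq\mathcal{M}_j$ whenever $m\ge j$, we have $\mathcal{N}_j\subseteq\mathcal{M}_j$, so a Williamson basis of $A|_{\mathcal{N}_j}$ produces a normalised symplectic pair $(p,q)\in\mathcal{N}_j\subseteq\mathcal{M}_j$ with $\tfrac12(\langle p,Ap\rangle+\langle q,Aq\rangle)=d_1(A|_{\mathcal{N}_j})$. The computation \eqref{eqs1} from the proof of Theorem \ref{thm1maxmin}, applied to $\mathcal{M}_j$ in place of $\mathcal{M}_k$, then forces $d_1(A|_{\mathcal{N}_j})\ge d_{i_j}$, and the symplectic interlacing theorem applied to $\mathcal{N}_j\subseteq\mathcal{M}$ yields $d_j(A|_\mathcal{M})\ge d_1(A|_{\mathcal{N}_j})\ge d_{i_j}$. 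Monotonicity of $\varphi$ now gives $\varphi(d_\mathcal{M})\ge\varphi((d_{i_1},\ldots,d_{i_k}))$ for every choice of symplectically orthonormal set in this chain.

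For the reverse inequality, given any decreasing chain $\mathcal{W}_1\supset\cdots\supset\mathcal{W}_k$ with $\dim\mathcal{W}_j=2n-i_j+1$, I would reuse the construction of $\{w_j,w_j'\}$ from the proof of Theorem \ref{thm_wndt} via Theorem \ref{thmr2}: this $\mathcal{B}$-orthosymplectic set lies in $\mathcal{W}_j$, and since $w_j=\sum_{m=1}^{i_j}(\alpha_{jm}u_m+\beta_{jm}v_m)$ has $\mathcal{B}$-norm one, the estimate \eqref{eqrp1} gives $\sigma_j:=\tfrac12(\langle w_j,Aw_j\rangle+\langle w_j',Aw_j'\rangle)\le d_{i_j}$ for every $j$. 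Set $\mathcal{M}=\span\{w_j,w_j':1\le j\le k\}$. For each $\ell\in\{1,\ldots,k\}$, pick the $\ell$ pairs $(w_{j_1},w_{j_1}'),\ldots,(w_{j_\ell},w_{j_\ell}')$ that realise the $\ell$ smallest values $\sigma_m^{\uparrow}$; this is again a symplectically orthonormal subset of $\mathcal{M}$, so Theorem 5 of \cite{bj} applied to $A|_\mathcal{M}$ yields $\sum_{m=1}^{\ell}d_m(A|_\mathcal{M})\le\sum_{m=1}^{\ell}\sigma_m^{\uparrow}$. The componentwise inequality $\sigma_j\le d_{i_j}$ together with the increasing ordering of $(d_{i_j})$ forces (by a one-line sorting argument) $\sigma_m^{\uparrow}\le d_{i_m}$, and therefore $\sum_{m=1}^{\ell}d_m(A|_\mathcal{M})\le\sum_{m=1}^{\ell}d_{i_m}$ for every $\ell$. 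This is exactly the supermajorisation $(d_{i_1},\ldots,d_{i_k})\prec^w d_\mathcal{M}$, and Lemma \ref{lemwg1} delivers $\varphi(d_\mathcal{M})\le\varphi((d_{i_1},\ldots,d_{i_k}))$, so the inner minimum over symplectically orthonormal sets in this chain is bounded above by $\varphi((d_{i_1},\ldots,d_{i_k}))$.

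The main technical obstacle I anticipate is the symplectic interlacing inequality $d_1(A|_{\mathcal{N}_j})\le d_j(A|_\mathcal{M})$ used in the first direction; while this is available from the interlacing results cited in \cite{bj2}, if a ready reference in the required form is lacking I would reprove this specific instance directly from Theorem \ref{thm1maxmin} on $A|_\mathcal{M}$, using the $\mathcal{B}$-complement and dimension-counting machinery of Lemma \ref{lemr1} to produce an appropriate $(2k-j+1)$-dimensional test subspace of $\mathcal{M}$ that extends $\mathcal{N}_j$.
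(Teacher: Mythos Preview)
Your argument for the direction $\varphi((d_{i_1},\ldots,d_{i_k}))\ge\text{RHS}$ is essentially the paper's: the paper cites the Schur--Horn supermajorisation of \cite{bjsh} to obtain $\alpha\prec^w d_\mathcal{U}$, while you derive the same relation by applying the Ky Fan inequality (Theorem~5 of \cite{bj}) for each $\ell$. There is only a notational slip: in the proof of Theorem~\ref{thm_wndt} it is the basis $\{x_j,x_j'\}$ that lies in the arbitrary chain $\mathcal{W}_j$, whereas the vectors carrying the estimate $\sigma_j\le d_{i_j}$ (your $w_j$) lie in the eigenbasis spaces $\mathcal{V}_j$; both bases span the same $\mathcal{M}$, so the argument survives.

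The genuine gap is in the other direction. The ``symplectic interlacing'' inequality $d_1(A|_{\mathcal{N}_j})\le d_j(A|_\mathcal{M})$ you rely on is \emph{false}, even when $\mathcal{N}_j$ is spanned by $k-j+1$ pairs of a symplectic basis of $\mathcal{M}$ and the constraint $x_m,y_m\in\mathcal{M}_m$ is in force. With $n=k=2$, $A=\diag(1,2,1,2)$, $(i_1,i_2)=(1,2)$, take $x_1=e_1$, $x_2=e_1+e_2$, $y_1=e_3-e_4$, $y_2=e_4$: this is symplectically orthonormal with $x_2,y_2\in\mathcal{M}_2=\span\{e_1,e_2,e_4\}$, and $\mathcal{M}=\mathbb{R}^4$, so $d_2(A|_\mathcal{M})=2$, yet $d_1(A|_{\mathcal{N}_2})=\sqrt{\langle x_2,Ax_2\rangle\,\langle y_2,Ay_2\rangle}=\sqrt{6}>2$. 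Unlike Hermitian Cauchy interlacing, the smallest symplectic eigenvalue of a restriction to a symplectic subspace can exceed every symplectic eigenvalue of the ambient problem, so no inequality of this form is available. Your fallback---extend $\mathcal{N}_j$ to a $(2k-j+1)$-dimensional test space and invoke Theorem~\ref{thm1maxmin} on $A|_\mathcal{M}$---cannot save the step either, because enlarging the test space only decreases the inner minimum, whereas you need a lower bound. The paper avoids this pitfall by never passing through $d_1(A|_{\mathcal{N}_j})$: it takes the non-symplectic subspace $\mathcal{N}_j=\span\{x_1,\ldots,x_k,y_j,\ldots,y_k\}$ of dimension exactly $2k-j+1$, so that Theorem~\ref{thm1maxmin} applied to $A|_\mathcal{M}$ gives $\tilde d_j\ge\min_{(x,y)\in\mathcal{N}_j}\tfrac12(\langle x,Ax\rangle+\langle y,Ay\rangle)$ directly, and then compares that minimum to $d_{i_j}$ via \eqref{eq2m}.
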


\begin{proof}
Let $\mathcal{B}=\{w_1,\ldots,w_n,z_1,\ldots,z_n\}$
be a symplectic eigenbasis of $\mathbb{R}^{2n}$ corresponding to the symplectic eigenvalues $d_1,\ldots,d_n$ of $A.$
For each $j=1,\ldots,k,$ let $\mathcal{W}_j$ be the space spanned by $\{w_1,\ldots,w_n,z_1,\ldots,z_{i_j}\},$ and
let $\mathcal{M}_1\supset\mathcal{M}_2\supset\cdots\supset\mathcal{M}_k$ be a decreasing chain of subspaces of $\mathbb{R}^{2n}$ such that $\dim\mathcal{M}_j=2n-i_j+1.$
By Theorem \ref{thmr2},
we can find a symplectic subspace $\mathcal{U},$
and its two $\mathcal{B}$-orthosymplectic bases
$\{u_1,\ldots,u_k,v_1,\ldots,v_k\}$ and $\{x_1,\ldots,x_k,y_1,\ldots,y_k\}$
such that $u_j,v_j\in\mathcal{M}_j$ and $x_j,y_j\in\mathcal{W}_j$ for all $j=1,\ldots,k.$
Then as in \eqref{eqrp1}, we have
\begin{equation}
d_{i_j}\ge \frac{\langle x_j,Ax_j\rangle+\langle y_j,Ay_j\rangle}{2},\ 1\le j\le k.\label{eqwg2}
\end{equation}
Let $\alpha$ be the $n$-vector $\left(\frac{\langle x_j,Ax_j\rangle+\langle y_j,Ay_j\rangle}{2}\right).$
Let $d_\mathcal{U}$ be the vector of symplectic eigenvalues of the real positive definite matrix $A_\mathcal{U}$ obtained by restricting $A$ to $\mathcal{U}.$
Then by Theorem 3 of \cite{bjsh}, we know that $\alpha\prec^w d_\mathcal{U}.$
By Lemma \ref{lemwg1}, we have $\varphi(\alpha)\ge\varphi(d_\mathcal{U}).$
Since $\alpha\le (d_{i_1},\ldots,d_{i_k})$ and $\varphi$ is increasing,
$\varphi(\alpha)\le\varphi((d_{i_1},\ldots,d_{i_k})).$
This proves that the left hand side of \eqref{eqwg1} is greater than or equal to its right hand side.

\sloppy
To prove the equality,
we consider the subspaces $\mathcal{M}_j$ spanned by the vectors $\{w_1,\ldots,w_n,z_{i_j},\ldots,z_{n}\}.$
Clearly $\mathcal{M}_1\supset\cdots\supset\mathcal{M}_k$ and $\dim\mathcal{M}_j=2n-i_j+1.$
Let $\mathcal{M}$ be the span of any symplectically orthonormal set $\{x_1,\ldots,x_k,y_1,\ldots,y_k\}$
where $x_j,y_j\in\mathcal{M}_j,$ $1\le j\le k.$
For $j=1,\ldots,k,$ let $\mathcal{N}_j$ be the subspace of $\mathcal{M}$ spanned by $\{x_1,\ldots,x_k,y_j,\ldots,y_k\}.$
By \eqref{eq2m}, we can see that
\begin{equation}
d_{i_j}\le {\underset{\underset{\langle x,Jy\rangle=1}{x,y\in\mathcal{M}_j}}{\min}}\frac{\langle x,Ax\rangle+\langle y,Ay\rangle}{2}\le {\underset{\underset{\langle x,Jy\rangle=1}{x,y\in\mathcal{N}_j}}{\min}}\frac{\langle x,Ax\rangle+\langle y,Ay\rangle}{2}.\label{eqs2}
\end{equation}
Let $\tilde{d_1}\le \cdots\le \tilde{d_k}$ be the symplectic eigenvalues of $A_\mathcal{M}.$
By using arguments similar to those in Theorem \ref{thm1maxmin} for $A_\mathcal{M},$
we have
\begin{equation*}
\tilde{d_j}={\underset{\underset{dim\mathcal{N}=2k-j+1}{\mathcal{N}\subseteq\mathcal{M}}}{\max}}{\underset{\underset{\langle x,Jy\rangle=1}{x,y\in\mathcal{N}}}{\min}}\frac{\langle x,Ax\rangle+\langle y,Ay\rangle}{2},\ \ 1\le k\le n.
\end{equation*}
This in turn gives
\begin{equation}
\tilde{d_j}\ge{\underset{\underset{\langle x,Jy\rangle=1}{x,y\in\mathcal{N}_j}}{\min}}\frac{\langle x,Ax\rangle+\langle y,Ay\rangle}{2}.\label{eqs3}
\end{equation}
Combining \eqref{eqs2} and \eqref{eqs3}, we get $d_{i_j}\le \tilde{d_j}$ for all $j=1,\ldots,k.$
Since $\varphi$ is monotonically increasing, $\varphi((d_{i_1},\ldots,d_{i_k}))\le\varphi((\tilde{d_1},\ldots,\tilde{d_k}))=\varphi(d_\mathcal{M}).$
Thus we have
$$\varphi((d_{i_1},\ldots,d_{i_k}))\le{\underset{\underset{\underset{\textrm{symp. o. n.}}{u_j,v_j\in\mathcal{M}_j}}{\mathcal{M}=\span\{u_1,\ldots,u_k,v_1,\ldots,v_k\}}}{\min}}\varphi(d_\mathcal{M}),$$
which proves \eqref{eqwg1}.
\end{proof}

By taking $\varphi((\alpha_1,\ldots,\alpha_k))=\alpha_1\cdots\alpha_k,$
we obtain the following corollary.
This extends Theorem 5(ii) of \cite{bj}.

\begin{cor}\label{corprod}
Let $A$ be a $2n\times 2n$ real positive definite matrix with symplectic eigenvalues $d_1\le\cdots\le d_n.$
Then for all $k=1,\ldots,n$ and $1\le i_1<\cdots<i_k\le n,$
\begin{equation}
\prod\limits_{j=1}^{k}d_{i_j}^2(A)={\underset{\underset{\dim\mathcal{W}_j=2n-i_j+1}{\mathcal{W}_1\supset\cdots\supset\mathcal{W}_k}}{\max}}{\underset{\underset{\underset{\textrm{ symplectically orthonormal}}{u_j,v_j\in\mathcal{W}_j}}{\mathcal{M}=\span\{u_1,\ldots,u_k,v_1,\ldots,v_k\}}}{\min}}\det(A_\mathcal{M}).\label{eqprod}
\end{equation}
\end{cor}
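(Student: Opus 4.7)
The plan is to derive this corollary as the special case of Theorem \ref{thmwg} obtained by choosing $\varphi:[0,\infty)^k\to \mathbb{R}_+$ to be the product $\varphi(\alpha_1,\ldots,\alpha_k) = \alpha_1\alpha_2\cdots\alpha_k$. First I would verify the three hypotheses of Theorem \ref{thmwg} for this $\varphi$: permutation invariance is immediate; monotonic increase on $[0,\infty)^k$ holds since all coordinates are nonnegative; and Schur-concavity follows from the classical fact that $\log\varphi(\alpha)=\sum_j\log\alpha_j$ is a symmetric concave function on $(0,\infty)^k$, hence Schur-concave there, combined with monotonicity of $\exp$ (see, e.g., \cite{mo}); the boundary case where some $\alpha_j=0$ is handled by continuity (or simply by noting that the symplectic eigenvalues that appear as arguments are always strictly positive).

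Next I would identify $\varphi(d_{\mathcal{M}})^2$ with $\det(A_{\mathcal{M}})$. Since $\mathcal{M}=\span\{u_1,\ldots,u_k,v_1,\ldots,v_k\}$ is a $2k$-dimensional symplectic subspace, the restriction $A_\mathcal{M}$ is a $2k\times 2k$ real positive definite matrix; Williamson's theorem applied to $A_\mathcal{M}$ yields a symplectic $N$ on $\mathcal{M}$ with $N^{T}A_\mathcal{M} N=\diag(\wt{D},\wt{D})$, where $\wt{D}$ is the diagonal matrix of symplectic eigenvalues $d_j(A_\mathcal{M})$. Taking determinants and using $\det N=1$ for every real symplectic matrix gives $\det(A_\mathcal{M})=\prod_{j=1}^{k}d_j(A_\mathcal{M})^2=\varphi(d_\mathcal{M})^2$.

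Putting these two ingredients together, Theorem \ref{thmwg} specialised to this $\varphi$ produces
\[
\prod_{j=1}^{k}d_{i_j}\;=\;{\underset{\underset{\dim\mathcal{W}_j=2n-i_j+1}{\mathcal{W}_1\supset\cdots\supset\mathcal{W}_k}}{\max}}\;{\underset{\underset{\underset{\textrm{symp. o.\ n.}}{u_j,v_j\in\mathcal{W}_j}}{\mathcal{M}=\span\{u_1,\ldots,u_k,v_1,\ldots,v_k\}}}{\min}}\;\varphi(d_\mathcal{M}),
\]
and \eqref{eqprod} then follows by squaring both sides. This last step is legitimate because $t\mapsto t^2$ is a monotone bijection of $[0,\infty)$ onto itself and all quantities involved are strictly positive, so $\min_\mathcal{M}\varphi(d_\mathcal{M})^2=\bigl(\min_\mathcal{M}\varphi(d_\mathcal{M})\bigr)^2$, and similarly the outer $\max$ commutes with squaring. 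There is no substantive obstacle here; the only points requiring genuine care are the Schur-concavity of the product and the commutation of squaring with the max–min, both of which are routine once stated.
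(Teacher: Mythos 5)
Your proposal is correct and follows exactly the route the paper takes: the paper simply states that the corollary follows from Theorem \ref{thmwg} with $\varphi(\alpha_1,\ldots,\alpha_k)=\alpha_1\cdots\alpha_k$, and your verification of the Schur-concavity/monotonicity hypotheses, the identification $\det(A_\mathcal{M})=\varphi(d_\mathcal{M})^2$ via Williamson's theorem, and the final squaring step are precisely the details the paper leaves implicit.
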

\vskip.1in

\noindent{\bf Proof of Theorem \ref{thmmlid}}:
Let $\mathcal{W}_1\supset\cdots\supset \mathcal{W}_k$ be a decreasing chain of subspaces of $\mathbb{R}^{2n}$ with $\dim\mathcal{W}_j=2n-i_j+1.$
Suppose $u_j,v_j\in\mathcal{W}_j$ are such that $\{u_1,\ldots,u_k,v_1,\ldots,v_k\}$ is symplectically orthonormal.
Let $\mathcal{M}$ be the span of $\{u_1,\ldots,u_k,v_1,\ldots,v_k\}.$
For the convenience of notation, we denote $v_1,\ldots,v_k$ by $u_{k+1},\ldots,u_{2k},$ respectively.
Let $U$ be the orthogonal matrix such that $A\# B=A^{1/2}UB^{1/2}.$
Now,
\begin{eqnarray}	
\left\vert\det(A\# B)_\mathcal{M}\right\vert^2&=&\left\vert\det\begin{bmatrix}\langle u_iA^{1/2}UB^{1/2}u_j\rangle\end{bmatrix}\right\vert^2\nonumber\\	
&=&\left\vert\det\begin{bmatrix}\langle A^{1/2}u_i,UB^{1/2}u_j\rangle\end{bmatrix}\right\vert^2\nonumber\\
&\le & \left\vert\det\begin{bmatrix}\langle A^{1/2}u_i,A^{1/2}u_j\rangle\end{bmatrix}\right\vert\left\vert\det\begin{bmatrix}\langle UB^{1/2}u_i,UB^{1/2}u_j\rangle\end{bmatrix}\right\vert\nonumber\\
&=& \left\vert\det(A_\mathcal{M})\right\vert\left\vert\det(B_\mathcal{M})\right\vert\nonumber\\
&\le & \det(A_\mathcal{M})\prod\limits_{j=1}^{2k}\lambda_j^\downarrow(B),\label{eqmlid1}
\end{eqnarray}
where $\lambda_1^\downarrow(B),\ldots,\lambda_{2n}^\downarrow(B)$ denote the eigenvalues of $B$ arranged in decreasing order.
The last inequality in \eqref{eqmlid1} follows from Theorem III.1.5 of \cite{rbh}.
By using \eqref{eqprod}, we obtain
$$\prod\limits_{j=1}^{k}d_{i_j}^4(A\# B)\le \prod\limits_{j=1}^{k}d_{i_j}^2(A)\prod\limits_{j=1}^{2k}\lambda_j(B).$$
Let $M$ be a symplectic matrix such that $M^TBM=\diag(D(B),D(B)),$
where $D(B)$ is the positive diagonal matrix with diagonal entries $d_1(B)\le \cdots\le d_n(B).$
By the congruence invariance property of geometric means,
$M^T(A\# B)M=(M^TAM)\# (M^TBM),$
and the fact that for all positive definite matrices $X,$ $d_j(M^TXM)=d_j(X),$ $1\le j\le n,$
we can assume that $B$ is the diagonal matrix $\diag(D(B),D(B)).$
In this case, $\prod\limits_{j=1}^{2k}\lambda_j^\downarrow(B)=\prod\limits_{j=1}^{k}d_{n-j+1}^2(B).$
This gives the second inequality in \eqref{eqmlid}.

Take $G=A\# B.$ Then we have $A^{1/2}=GB^{-1/2}U^*.$
Calculations similar to those in \eqref{eqmlid1} give us the relation
$$\det(A_\mathcal{M})\le \left(\det(G_\mathcal{M})\right)^2\det(B^{-1}_{U\mathcal{M}})\le\left(\det(G_\mathcal{M})\right)^2\prod\limits_{j=1}^{2k}\lambda_j^\downarrow(B^{-1}).$$
Again, as in the previous paragraph, we can assume that $B$ is the diagonal matrix $\diag(D(B),D(B)),$ and by using \eqref{eqprod} we can obtain
$$\prod\limits_{j=1}^{k}d_{i_j}^2(A)\le\prod\limits_{j=1}^{k}d_{i_j}^4(G)\prod\limits_{j=1}^{k}\frac{1}{d_j^2(B)}.$$
This gives the first inequality of \eqref{eqmlid}.
\qed
\vskip.1in

We end the paper with an example which shows that unlike eigenvalues, the symplectic eigenvalues of $AA^T$ and $A^TA$ need not be equal.
Let $A$ be the $4\times 4$ matrix
$$A=\begin{bmatrix}
A_1 & O\\
O & A_2\end{bmatrix},$$
where $A_1=\begin{bmatrix}1 & 0\\
0 & 2\end{bmatrix}$ and $A_2=\begin{bmatrix}0 & 1\\
2 & 0\end{bmatrix}.$
Then straightforward calculations show that the symplectic eigenvalues of $A^TA$ are $2$ and $2$; whereas the symplectic eigenvalues of $AA^T$ are $1$ and $4.$
Further, it can be easily seen from Matlab experiments that for any two $2n\times 2n$ real positive definite matrices $A$ and $B,$
the symplectic eigenvalues of $A^{1/2}BA^{1/2}$ and $B^{1/2}AB^{1/2}$ need not be the same.
\vskip.2in

{\bf Acknowledgement}: The author acknowledges the
financial support from SERB
MATRICS grant number MTR/2018/000554.
\vskip.1in

\section*{Conflict of interest}

 The author declares that she has no conflict of interest.

\end{document}